\newcommand{\EDr}{{\ED}^r}
\tikzset{
  midarrow/.style={
    decoration={markings,mark=at position 0.5 with {\arrow{>}}},
    postaction={decorate}
  }
}
\newif\ifdviwin
\numberwithin{equation}{section}
\definecolor{gold}{rgb}{0.9, 0.7, 0.2}
\theoremstyle{plain}
\newtheorem{theorem}{Theorem}[section]
\newtheorem*{Main Theorem}{Main Theorem}
\newtheorem{proposition}[theorem]{Proposition}
\newtheorem{lemma}[theorem]{Lemma}
\newtheorem{corollary}[theorem]{Corollary}
\newcounter{intro}
\newtheorem{introthm}[intro]{Theorem}
\theoremstyle{definition}
\newtheorem{construction}[theorem]{Construction}
\newtheorem{remark}[theorem]{Remark}
\newtheorem{definition}[theorem]{Definition}
\newtheorem{example}[theorem]{Example}
  \newcounter{numlist} %
  {\end{list}}%
\theoremstyle{remark}
\newtheorem{chunk}[theorem]{}
\numberwithin{equation}{section}
\newcommand{\st}{\colon }
\newcommand{\m}{\mathbf{m}}
\newcommand{\lcm}{\mathrm{lcm}}
\newcommand{\LCM}{\mathrm{LCM}}
\newcommand{\e}{\epsilon}
\newcommand{\ed}[1]{\epsilon_{\mbox{\tiny$\D$},#1}}
\newcommand{\ssm}{\smallsetminus}
\newcommand{\height}{\mathrm{height}}
\newcommand{\ED}{\E_{\D}}
\newcommand{\E}{\mathcal{E}}
\newcommand{\NN}{\mathbb{N}}
\newcommand{\ba}{\mathbf{a}}
\newcommand{\bc}{\mathbf{c}}
\newcommand{\qand}{\quad \mbox{ and } \quad }
\newcommand{\qfor}{\quad \mbox{ for } \quad }
\newcommand{\qforeach}{\quad \mbox{ for each } }
\newcommand{\qor}{\quad \mbox{ or } \quad }
\newcommand{\qwith}{\quad \mbox{with} \quad}
\newcommand{\qforsome}{\quad \mbox{for some} \quad}
\newcommand{\qforall}{\quad \mbox{for all} \quad}
\newcommand{\Erq}{{\E_q}^r}
\newcommand{\Nrq}{\N^r_q}
\newcommand{\pme}{{\pmb{\e}}}
\newcommand{\ped}{\pme_{\mbox{\tiny$\D$}}}
\newcommand{\bm}{{\mathbf{m}}}
\newcommand{\bma}{{\bm^{\ba}}}
\newcommand{\N}{\mathcal{N}}
\newcommand{\U}{\mathcal{U}}
\newcommand{\D}{\mathcal{D}}
\newcommand{\R}{{\sf{Div}}}
\newcommand{\ex}{{\sf{ex}}}
\newcommand{\triv}{{\sf{triv}}}
\newcommand{\Base}{{\sf{Base}}}
\newcommand{\sfk}{\mathsf k}
\begin{document}
\author[S.~M.~Cooper]{Susan M. Cooper}
\address{Department of Mathematics\\
University of Manitoba\\
520 Machray Hall\\
186 Dysart Road\\
Winnipeg, MB\\
Canada R3T 2N2; \it{Email address:} \tt{susan.cooper@umanitoba.ca}}

\author[S.~El Khoury]{Sabine El Khoury}
\address{Applied Mathematics and Computational Sciences Program,
King Abdullah University of Science and Technology,
 Thuwal 23955-6900, Kingdom of Saudi Arabia; \it{Email address:} \tt{khouryss@kaust.edu.sa}}

\author[S.~Faridi]{Sara Faridi}
\address{Department of Mathematics \& Statistics\\
Dalhousie University\\
6316 Coburg Rd.\\
PO BOX 15000\\
Halifax, NS\\
Canada B3H 4R2; \it{Email address:} \tt{faridi@dal.ca}}

\author[S.~Morey]{Susan Morey}
\address{Department of Mathematics\\
Texas State University\\
601 University Dr.\\
San Marcos, TX 78666\\U.S.A.; \it{Email address:} \tt{morey@txstate.edu}}

\author[L.~M.~\c{S}ega]{Liana M.~\c{S}ega}
\address{Division of Computing, Analytics and Mathematics, 
University of Missouri-Kansas City, Kansas City, MO 64110 U.S.A.; \it{Email address:} \tt{segal@umkc.edu}}

\author[S.~Spiroff]{Sandra Spiroff }
\address{Department of Mathematics,
University of Mississippi,
Hume Hall 335, P.O. Box 1848, University, MS 38677
U.S.A.; \it{Email address:} \tt{spiroff@olemiss.edu}}

\setlist{font=\normalfont}

\keywords{monomial ideals; divisibility relations; $\D$-extremal ideals; extremal ideals; powers of ideals; free resolutions}

\subjclass[2020]{13A05; 13C05; 13D02; 13F55; 05E40}

\title{Divisibility Relations and $\D$-Extremal ideals}

\begin{abstract}
A divisibility relation between the generators of a square-free monomial ideal formally encodes the situation when one generator  divides the least common multiple of some other generators. The divisibility relations contribute to the deletion of some parts of the Taylor resolution of the ideal, and therefore lead to finding a resolution closer to the minimal one. Motivated by this observation, for a given set $\D$ of divisibility relations, we study all square-free monomials satisfying the relations in $\D$. We define a class of square-free monomial ideals called $\D$-extremal ideals $\ED$, and show it is optimal in the sense that it is an ideal satisfying exactly those divisibility relations coming from 
$\D$, and no others. We then show that $\ED$ is extremal in the sense  that the resolution and betti numbers of the powers of any square-free monomial ideal satisfying the relations in $\D$ are bounded by those of the same powers of $\ED$.

\end{abstract}

\maketitle

\section{\bf Introduction} 

A free resolution of an ideal is a way to keep track of the relations between the generators of that ideal, and the relations between those relations, and so forth. 
A monomial ideal with $q$ generators has a free resolution -- called the {\it Taylor resolution}~\cite{T}  -- that can be geometrically realized by a simplex with $q$ vertices, each vertex labeled by a monomial generator of the ideal, and each face labeled by the least common multiple (lcm) of its vertex labels. The Taylor resolution, like every free resolution, contains a minimal free resolution of the ideal.  Much of the research in this area is focused on finding methods to delete the extra faces (components) of the Taylor simplex (resolution) in order to arrive at a resolution closer to the minimal one. The main machinery behind such methods is identifying faces with equal lcm labels, which comes down to understanding divisibility relations of the form $m_1\mid \lcm(m_2, \dots, m_s)$ between subsets of monomial generators of an ideal. 

In this paper, we introduce a way to formalize the divisibility relations described above in order to study ideals based on the divisibility relations between their generators, rather than the generators themselves.
 For example, if under a fixed ordering on the minimal monomial generators of an ideal $I$ we know that the monomial generators $m_1,m_2,m_3$  satisfy  $m_1 \mid \lcm(m_2,m_3)$, then we encode this relation via the indices of the monomials as the {\it divisibility relation} $(1,\{2,3\})$  and we say that $m_1,m_2,m_3$ satisfy the relation $(1,\{2,3\})$. Given an integer $q$ and a set $\D \subseteq [q]\times 2^{[q]}$ of divisibility relations encoded as above, we introduce a $q$-generated square-free monomial ideal $\ED$, which we call a $\D$-{\it extremal ideal}, whose minimal generators satisfy the relations in $\D$. 

 One of the reasons for referring to $\ED$ as being extremal (with respect to $\D$) is that its minimal generators, when considered in a specific order, satisfy only the divisibility relations that stem from $\D$.  To state this result, we develop a formal language for working with divisibility relations and, starting with a set $\D$ as above, we define the set $\overline{\D}$ of all divisibility relations that can be {\it deduced} from $\D$ via certain closure operations, see \cref{bar}. In \cref{t:power1-rels} and \cref{t:when-equal} we show:

 \begin{introthm}
The set $\overline{\D}$ consists of all divisibility relations satisfied by the generators of the $\D$-extremal ideal $\ED$, and in particular, $\E_{\overline{\D}}=\ED$.
 \end{introthm}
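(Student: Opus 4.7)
The plan is to establish that the set of all divisibility relations satisfied by the ordered tuple of minimal generators of $\ED$ coincides with $\overline{\D}$, and to then derive the ideal equality $\E_{\overline{\D}} = \ED$ as a consequence. Let $\Sigma$ denote the set of divisibility relations satisfied by the generators of $\ED$ (in their specified order). The goal is the two inclusions $\overline{\D} \subseteq \Sigma \subseteq \overline{\D}$.

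For $\overline{\D} \subseteq \Sigma$, I would argue by induction on the length of a derivation of a relation in $\overline{\D}$ from $\D$ using the closure operations of \cref{bar}. The base case is that every relation in $\D$ lies in $\Sigma$, which is immediate from the defining property of the $\D$-extremal ideal: its generators are built to satisfy $\D$. For the inductive step, one checks that each deduction rule appearing in \cref{bar} encodes a sound implication between statements of the form $m_i \mid \lcm(m_j : j \in S)$. This reduces to routine properties of lcm divisibility, such as transitivity and monotonicity in $S$, and therefore holds automatically in any ideal whose generators already satisfy the hypothesized relations.

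The reverse inclusion $\Sigma \subseteq \overline{\D}$ is the main obstacle: it asserts that the extremal construction introduces no spurious divisibility relations among its generators. My approach would be to exploit the explicit square-free monomial description of $\ED$. One expects each generator $m_i$ to be assembled from variables that can be indexed by the particular rules in $\D$ involving $i$, so that every variable of $m_i$ carries a record of why it was introduced. Given a relation $(i, S) \in \Sigma$, each variable $x$ dividing $m_i$ must also divide some $m_j$ with $j \in S$; I would classify $x$ by the rule in $\D$ responsible for placing it into $m_i$, and use the specific $j \in S$ that inherits $x$ to produce a ``local'' piece of a derivation. Reassembling these local witnesses through the closure operations of \cref{bar} should yield a full derivation of $(i, S)$ from $\D$. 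The delicate point is that the assembly must always succeed, which amounts to showing that the extremal construction is tight — no variable appears in two generators unless forced by $\D$ — and this is where I expect the bulk of the combinatorial work to lie.

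Finally, the equality $\E_{\overline{\D}} = \ED$ follows from the identity $\Sigma = \overline{\D}$ together with the fact that the extremal construction is determined by its input relation set. Since $\overline{\D}$ is closed under the deduction rules (i.e.\ $\overline{\overline{\D}} = \overline{\D}$), applying the construction to $\overline{\D}$ uses exactly the same data that $\Sigma = \overline{\D}$ records for $\ED$, so the two ideals must coincide.
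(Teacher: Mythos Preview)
Your easy inclusion $\overline{\D}\subseteq \Sigma$ is fine and matches the paper: one checks $\D\subseteq \R(\ED)$ directly from the definition of $Q(\D)$, then closes under the $\circ$-operation, extensions, and trivial relations.

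For the hard inclusion $\Sigma\subseteq\overline{\D}$, your proposed mechanism does not match the actual structure of $\ED$. The variables of $\ED$ are not ``indexed by the particular rules in $\D$ involving $i$''; rather, each variable is $y_A$ for a subset $A\in Q(\D)$, and $y_A\mid \ed{i}$ simply because $i\in A$. A set $A$ lies in $Q(\D)$ precisely when it \emph{avoids} violating every rule, so there is no single rule ``responsible'' for $y_A$, and no obvious way to extract a local derivation witness from the presence of $y_A$ in two generators. The paper's argument runs in the opposite direction: given $(b,B)\in\R(\ED)$, one tries to build a set $A\in Q(\D)$ with $b\in A$ and $A\cap B=\emptyset$, which would witness $y_A\mid\ed{b}$ but $y_A\nmid\lcm(\ed{i}:i\in B)$ and hence contradict $(b,B)\in\R(\ED)$. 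The obstruction to building such an $A$ is organized via a finite rooted \emph{decision tree} $T_{(b,B)}$ whose even levels are labeled by elements of $\Base(\D)$ and odd levels by rules; a recursive good/bad marking on the vertices is set up so that (i) if the root is good, an explicit $\circ$-composition shows $(b,B)\in\overline{\D}$, and (ii) if the root is bad, the extremely-bad vertices furnish the forbidden set $A$. This dichotomy is the combinatorial heart of the proof, and nothing like it is suggested by ``reassembling local witnesses.''

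Your deduction of $\E_{\overline{\D}}=\ED$ also has a gap. Knowing $\R(\ED)=\R(\E_{\overline{\D}})=\overline{\D}$ does not by itself force the two ideals to coincide; different monomial ideals can share the same divisibility poset. What the paper shows is $Q(\D)=Q(\overline{\D})$ directly: one inclusion is trivial from $\D\subseteq\overline{\D}$, and for the other one observes that adding any $(b,B)\in\R(\ED)$ to $\D$ leaves $Q(\D)$ unchanged (if $A\in Q(\D)$ and $b\in A$ then $y_A\mid\ed{b}$, so $y_A\mid\ed{i}$ for some $i\in B$, forcing $A\cap B\ne\emptyset$). Since the generators $\ed{i}$ are determined by $Q(\D)$, the ideal equality follows.
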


It is worth noting that we expect that a similar statement  can be formulated for all powers ${\ED}^r$ of $\ED$, namely that ${\ED}^r$ satisfies only divisibility relations deduced from $\D$, however, the main challenge is formulating what it means for divisibility relations to be deduced from a fixed set. The language for such a statement becomes more technical as $r$ increases. The case $r=1$ and its ramifications for free resolutions is the current paper, and the case $r=2$ is handled in  \cite{Blue}.  

$\D$-extremal ideals are a refinement of {\it extremal ideals}, which correspond to the situation when $\D=\emptyset$.
Extremal ideals were introduced in~\cite{Lr} as a class of square-free monomial ideals  whose powers have the largest possible multigraded minimal free resolutions among powers of all square-free monomial ideals $I$. As a result, extremal ideals provide effective bounds on the betti numbers of the powers of $I$. In other words, if $I$ is generated by $q$ square-free monomials, $i\geq 0$, and $r>0$, we have  
$$
\beta_i(I^r) \leq \beta_i(\Erq).
$$ 
The strength of this statement is that the bounds work for {\it all} square-free monomial ideals, and they only depend on  $q$ and $r$, and that the bounds are achieved by the extremal ideal. These bounds can be viewed as significant improvements over the binomial bounds  given by the Taylor resolution, see \cite{Scarf} for a comparison of the bounds when $r=3$.  

$\D$-extremal ideals allow us to give tighter bounds on betti numbers of powers by taking into consideration the divisibility relations among the generators. The following theorem combines statements of \cref{t:EDextremal} and \cref{t:lattice}. 

\begin{introthm}
\label{B} 
Let $I$ be an ideal minimally generated by monomials $m_1,\ldots,m_q$, and $\D$ a set of divisibility relations.
\begin{enumerate}
\item  If $m_1,\ldots,m_q$ are square-free and satisfy all the divisibility relations in $\D$, then  
$$\beta_i(I^r) \leq \beta_i({\ED}^r) \qforall i\geq 0, \quad r>0.$$
\item If $\overline{\D}$ is the set of all divisibility relations satisfied by $m_1,\ldots,m_q$, then $$\beta_i(I)=\beta_i(\ED) \qforall i \geq 0.$$
\end{enumerate}
\end{introthm}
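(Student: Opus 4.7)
My plan is to reduce both parts to comparisons of lcm lattices and invoke the Gasharov--Peeva--Welker theorem, which states that multigraded betti numbers of a monomial ideal are computed from reduced homology of open intervals in its lcm lattice, so that any atom-preserving surjective join-homomorphism $L_J \twoheadrightarrow L_I$ yields $\beta_i(I) \le \beta_i(J)$ for all $i$, with equality when the map is an isomorphism.

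For part (1), I would start with $r=1$ and define $\phi \colon L_{\ED} \twoheadrightarrow L_I$ on atoms by sending the $i$-th generator of $\ED$ to $m_i$. The essential verification is that $\phi$ extends to a join-homomorphism, which amounts to the statement that every divisibility relation satisfied by the generators of $\ED$ is also satisfied by $m_1,\ldots,m_q$. By Theorem A, the former relations are precisely those in $\overline{\D}$, each of which is obtained from $\D$ by closure rules that preserve the property of being satisfied by any fixed family of monomials; since $m_1,\ldots,m_q$ satisfy all relations in $\D$ by hypothesis, they satisfy all of $\overline{\D}$. Gasharov--Peeva--Welker then gives $\beta_i(I) \le \beta_i(\ED)$. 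For $r>1$, I would appeal to the general fact that the lcm lattice of a power $I^r$ is a functor of $L_I$: an atom-preserving surjective join-homomorphism $L_I \twoheadrightarrow L_J$ induces an analogous map $L_{I^r} \twoheadrightarrow L_{J^r}$, because atoms of $L_{I^r}$ are determined by $r$-multisets of atoms of $L_I$ together with the joins encoding products. A second application of Gasharov--Peeva--Welker then yields $\beta_i(I^r) \le \beta_i(\ED^r)$.

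For part (2), specialize $r=1$ and let $\overline{\D}$ denote the set of all divisibility relations satisfied by $m_1,\ldots,m_q$. This set is automatically closed under the deduction rules, since any rule producing new relations from old ones must preserve the property of being satisfied by fixed monomials; hence Theorem A applied to $\overline{\D}$ shows that the generators of $\E_{\overline{\D}}$ satisfy exactly the same relations as $m_1,\ldots,m_q$. Consequently the atom bijection from part (1) preserves divisibility in both directions, promoting $\phi$ to an isomorphism $L_I \cong L_{\ED}$ of lcm lattices, and Gasharov--Peeva--Welker gives $\beta_i(I)=\beta_i(\ED)$ for all $i$.

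The main obstacle I anticipate is the functoriality step passing from $r=1$ to $r>1$ in part (1). Writing down the induced map $L_{\ED^r} \twoheadrightarrow L_{I^r}$ explicitly would require knowing that divisibility relations among $r$-fold products of generators reduce to those at the atom level, a point which in the present paper is spelled out only for $r=1$ and for $r=2$ is the subject of the companion paper \cite{Blue}. For the empty-$\D$ case this functoriality is established in \cite{Lr}, and I expect the extension here follows by the same argument once the atom map is seen to respect the multiset structure governing lcms of products. Part (2) and the $r=1$ case of part (1) are otherwise purely formal consequences of Theorem A together with Gasharov--Peeva--Welker.
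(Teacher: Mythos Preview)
Your treatment of part~(2) and of the case $r=1$ in part~(1) matches the paper's \cref{t:lattice}: define the atom bijection $\ed{i}\mapsto m_i$, use \cref{t:power1-rels} to see that $\R(\ED)=\overline\D\subseteq\R(I)$ so the map extends to a join-preserving map of lcm lattices, and invoke \cite{GPW}. When $\overline\D=\R(I)$ the inclusion is an equality and the lattice map is an isomorphism. This part is fine.

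The gap is exactly where you flagged it, and it is a real gap rather than a routine extension. Your claim that ``the lcm lattice of a power $I^r$ is a functor of $L_I$'' is not correct as stated: the lcm lattice $\LCM(I)$ records only joins of generators, not their products, and the atoms and relations in $\LCM(I^r)$ depend on the products $m_1^{a_1}\cdots m_q^{a_q}$, which are not determined by $L_I$. So a join-homomorphism $L_{\ED}\to L_I$ does not by itself induce a map $L_{\EDr}\to L_{I^r}$. What is needed is a map that respects multiplication \emph{and} lcms simultaneously.

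The paper supplies exactly this extra structure: it works with the ring homomorphism $\psi_I\colon S_{[q]}\to R$ from \cite{Lr}, defined on the variables $y_A$. Being a ring map, $\psi_I$ automatically sends $\ped^{\ba}$ to $\bm^{\ba}$ and hence $\EDr$ onto $I^r$ (\cref{p:old-new}(2),(3)). The square-free hypothesis on $I$ enters here, in the very definition of $\psi_I$. The lcm-preservation for $r$-fold products, $\psi_I(\lcm(\ped^{\ba_1},\dots,\ped^{\ba_t}))=\lcm(\bm^{\ba_1},\dots,\bm^{\ba_t})$, is then proved directly from the formula (\cref{p:old-new}(4)): one observes $\psi_I(y_A)=1$ whenever $A\notin Q(\D)$, which reduces the computation to the $\E_q$ case already handled in \cite[Lemma~7.7(iii)]{Lr}. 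With both properties in hand, the betti inequality follows from the machinery of \cite[Theorem~3.8]{extremal}. So the passage to $r>1$ is not lattice-theoretic functoriality but rather the use of a genuine ring map whose lcm-compatibility is checked by an explicit variable-by-variable calculation.
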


It is worth emphasizing that similar to~\cite{GPW}, the proof of item  (2) above shows that the minimal free resolution of any monomial ideal can be constructed purely from the set of its divisibility relations, or even a smaller set of divisibilities that generates the full set.

To illustrate the improvement  provided by the bounds given by Theorem~\ref{B}, we consider an example. If the ideal $I$ has $q=4$ generators that satisfy $m_1 \mid \lcm(m_2,m_3)$, then, using Macaulay2~\cite{M2} to compute the betti numbers of $\ED$ for $\D=\{(1,\{2,3\})\}$, the bounds in (1) become
$$ \beta_1(I)\leq 5
\qand 
\beta_2(I) \leq 2
\qand 
\beta_i(I)=0 \quad i>2.
$$ 
For comparison, the binomial Taylor bounds (which coincide with the bounds provided by $\E_4$) are: $\beta_1 \leq 6, \beta_2 \leq 4, \beta_3 \leq 1$. Even in this very small example one can see that the existence of a divisibility relation  improves the existing bounds. This improvement typically becomes more significant when examining powers of $I$.  For comparison, the bounds for the betti numbers of $I^2$ starting with $i=1$ are:
$$
{\ED}^2: \   10, 21, 15, 3; 
\quad
{\E_4}^2: \  10,27,32,19,6,1; 
\quad 
\mbox{Taylor}: \  10,45,120,210,252,210,120,45,10,1.
$$
The bounds  provided by ${\ED}^2$
are  clearly better than those offered by  ${\E_4}^2$, and 
significantly better than  Taylor's binomial bounds, which grow extremely quickly for large values of $q$ and $r$.  A further investigation of the strength of the new bounds provided by $\ED$ is pursued in our subsequent paper~\cite{Blue}. 

While finding the betti numbers of ${\ED}^r$ is the topic of future investigations, we describe here the $0$th one, namely the minimal number of generators of ${\ED}^r$: For any $r\ge 1$, we show in \cref{p:minimal} that a minimal generating set of ${\ED}^r$ consists of all products of $r$ (not necessarily distinct)  minimal generators of ${\ED}$. Thus, when $\ED$ is $q$-generated, the powers of $\ED$ achieve the maximum minimal number of generators for the powers of {\it any} $q$-generated ideal, independent from the set $\D$.

The paper is organized as follows. In
\cref{s:closure},  we create a language and a set of operations to study  divisibility relations. In particular, for a set $\D$ of divisibility relations, we develop the notion of a 
{\it closure} $\overline{\D}$, which is the set of all possible  divisibility relations that stem from $\D$. We also introduce a concept of a {\it minimal generating set} of divisibility relations for $\D$, which allows one to work with smaller, more manageable sets.   \cref{sec:D-extremal}
 introduces the $\D$-extremal ideals, and pursues
 the (challenging) task of showing that $\ED$
 satisfies exactly  those divisibility relations in $\overline{\D}$,
 and nothing more. In the last section, we make the connection to betti numbers of powers and we prove Theorem~\ref{B}.

\subsection*{Acknowledgements}
The research for this paper was initiated during the authors' stay at the American Institute of Mathematics (AIM), as part of the AIM SQuaRE program.  We are grateful to AIM for their warm hospitality, and for providing a stimulating  research environment.  Authors Cooper and Faridi are  partially supported by NSERC Discovery Grants 2024-05444 and 2023-05929, respectively.

\section{\bf Divisibility relations}\label{s:closure}

 Given any set of monomials, there are often relations in which one monomial divides the least common multiple of the others. We call such a relation a {\it divisibility relation} among the monomials. When dealing with a monomial ideal, there can be multiple divisibility relations among its  generators.
In this section, we  introduce a formal framework to study these divisibility relations and define operations on them in order to determine additional divisibility relations that can be deduced from a given set of relations.

For example, for
\begin{equation}\label{two relations}  I=(bcg, abg, cdf, adgh, bef)
\end{equation} 
set $u_1 =bcg; u_2=abg; u_3=cdf; u_4 = adgh,  u_5=bef$.  
Then 
\begin{equation}\label{eq:intro-rel}
u_1\mid \lcm(u_2,u_3) 
\qand u_2\mid \lcm(u_4,u_5).
\end{equation}
In general, any monomials satisfying \eqref{eq:intro-rel} will also necessarily satisfy the relations
\begin{equation}
\label{eq:intro-rel-consequences}
u_1\mid \lcm(u_3,u_4,u_5)\qand u_1\mid \lcm(u_2,u_3,u_4).
\end{equation} 
Thus, from a bookkeeping perspective, one needs to record only the divisibilities in \eqref{eq:intro-rel}, which in this paper we will summarize 
 as $(1,\{2,3\})$ and $(2, \{4,5\})$, respectively. 
The additional two divisibilities in \eqref{eq:intro-rel-consequences}, namely $(1,\{3,4,5\})$ and $(1, \{2,3, 4\})$, can be deduced from the original pair, hence in principle they do not need to be recorded. In this section, we will develop a language to distinguish {\it minimal} divisibility relations and {\it non-minimal} ones, that is, those that can be deduced from other ones.

\begin{definition}[{\bf Divisibility relations}]
\label{d: div-rel}
Let $U=\{u_i\st i\in \Lambda\}$ denote a set   of distinct monomials indexed by a finite set $\Lambda$, so that $u_i=u_j$ implies $i=j$. We define a {\bf divisibility relation on} $U$, encoded as the pair $(b,B)\in \Lambda\times 2^\Lambda$, to be a relation of the form 
\begin{equation}\label{e:div-rel}
u_b\mid \lcm(u_i\st i\in B) \qforsome  b\in \Lambda \qand  \emptyset \ne B \subseteq \Lambda.
\end{equation} 
We say that $(b,B)$ is {\bf trivial} if $b\in B$, and we say that $(c,C)$ is an {\bf extension} of $(b,B)$ if $b=c$ and $B \subseteq C$. Let
$(b,B)^\ex$ denote the set of all extensions of $(b,B)$, in other words 
$$(b,B)^\ex =\{ (b,C) \st B \subseteq C\}.$$ 
For a set of divisibility relations $\D = \{(b_1,B_1),\ldots,(b_d,B_d)\}$ the {\bf base set} of $\D$ is the set
$$\Base(\D) = \{b_1,\ldots,b_d\}.$$
The {\bf set of all divisibility relations on $U$} is
\begin{equation}\label{d:Div}
\R(U)=\{(b,B) \st (b,B) \mbox{ is a divisibility relation on } U \} \subseteq \Lambda\times 2^{\Lambda\ssm\{\emptyset\}}.
\end{equation}
The  set of divisibility relations on a monomial ideal $I$, denoted by $\R(I)$, refers to  the set $\R(U)$ where $U$ is the minimal monomial generating set of $I$, considered with a fixed ordering.
\end{definition}
Note that the set $\Lambda$ in this definition will later correspond to the set $[q]$, where $q$ is the number of monomial generators of the ideals considered.
We introduce an operation $\circ$ on $ \Lambda\times 2^\Lambda$. For elements $(b,B)$ and  $(c,C)$ of $ \Lambda\times 2^\Lambda$ define 
\begin{equation}\label{e:circ}
(b,B)\circ (c,C) = (b, (B\smallsetminus \{c\})\cup C).    
\end{equation}

 \begin{example}\label{e:extend}
 With $U=\{u_1,u_2,u_3,u_4, u_5\}$, the divisibility relations \eqref{eq:intro-rel}  generate those in \eqref{eq:intro-rel-consequences} via the operation $\circ$ and extensions, since  
$ (1,\{2,3\}) \circ (2,\{4,5\})=(1,\{3,4,5\})$, and $(1,\{2,3,4\})$ is an extension of $(1,\{2,3\})$.
 
 Note that the operation $\circ$ is not commutative, as we have 
 \[
 (2,\{4,5\}) \circ(1,\{2,3\})=(2,\{2,3,4,5\}).
 \]
 One can also show $\circ$ is not associative, but we leave this to the reader. 
  \end{example}

We now collect some basic properties of divisibility relations and the operation $\circ$ that will be used throughout the paper. 

\begin{proposition}\label{p:new-from-old}
Let $\Lambda$ be a finite set, and $U$ be a set of monomials indexed by $\Lambda$. Then $\circ$ is a (noncommutative, nonassociative) binary operation on $\R(U)$. 
That is, 
 \begin{equation}\label{i3}
 (b,B), (c,C) \in \R(U) \Longrightarrow
 (b,B)\circ (c,C) \in \R(U).
 \end{equation}
In addition, for elements $b,c,d \in \Lambda$
and nonempty subsets $B,C,D$ of $\Lambda$, we have
\begin{enumerate} 
 \item\label{i1} if $(b,B) \in \R(U)$ and $(b,C)\in (b,B)^\ex$, then $(b,C) \in \R(U)$;
 \item\label{i2} $(b,B) \circ (b,B)=(b,B)$;
 \item\label{i4} if $(b,C)\in (b,B)^\ex$, then 
 \begin{align*}
 (b,C)\circ (d,D)\in ((b,B)\circ (d,D))^\ex \qand (d,D)\circ (b,C)\in ((d,D)\circ (b,B))^\ex;
 \end{align*}
 \item\label{i5} $(b,B)\circ (b,C)\in (b,C)^\ex$; 
 \item\label{i6} if $(b,B)$ is trivial, then the following hold: 
 \begin{enumerate}
  \item\label{i6a} $(b,B) \in \R(U)$;
  \item\label{i6b} $(b,B)\circ (c,C)$ is  trivial if $b\ne c$;  
 \item\label{i6c} $(c,C)\circ (b,B) \in (c,C)^\ex$; 
 \item\label{i6d} if $(b,C)\in (b,B)^\ex$, then $(b,C)$ is trivial. 
 \end{enumerate}
 \end{enumerate}
 \end{proposition}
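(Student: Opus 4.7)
The plan is to verify each item by unwinding the definition \eqref{e:circ} of $\circ$ and invoking elementary properties of least common multiples. My strategy is to first dispatch the three genuinely divisibility-based claims -- the closure statement \eqref{i3} together with \eqref{i1} and \eqref{i6a} -- and then handle the remaining items, all of which reduce to set-theoretic identities about the second coordinates. For \eqref{i1}, enlarging the index set enlarges the lcm in the divisibility order, so $u_b \mid \lcm(u_i \st i \in B) \mid \lcm(u_i \st i \in C)$ whenever $B \subseteq C$. For \eqref{i6a}, when $b \in B$ the monomial $u_b$ is one of the terms being combined, so $u_b \mid \lcm(u_i \st i \in B)$ is automatic.

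The main obstacle is \eqref{i3}. Given $u_b \mid \lcm(u_i \st i \in B)$ and $u_c \mid \lcm(u_i \st i \in C)$, I need to produce $u_b \mid \lcm(u_i \st i \in (B \setminus \{c\}) \cup C)$. I would split into two cases. If $c \notin B$, then $B \setminus \{c\} = B \subseteq B \cup C$, so the conclusion follows from the first hypothesis together with \eqref{i1}. If $c \in B$, I would isolate the $u_c$-contribution in the lcm via
\[
\lcm(u_i \st i \in B) = \lcm\bigl(u_c,\, \lcm(u_i \st i \in B \setminus \{c\})\bigr),
\]
replace $u_c$ using the second hypothesis, and deduce
\[
\lcm(u_i \st i \in B) \,\bigm|\, \lcm(u_i \st i \in (B \setminus \{c\}) \cup C),
\]
which chains with the first hypothesis by transitivity of divisibility.

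The remaining items are routine set manipulations once the definition of $\circ$ is substituted: \eqref{i2} is the identity $(B \setminus \{b\}) \cup B = B$; \eqref{i5} follows from $(B \setminus \{b\}) \cup C \supseteq C$; \eqref{i4} is the monotonicity of the assignments $X \mapsto (X \setminus \{d\}) \cup D$ and $X \mapsto (D \setminus \{b\}) \cup X$ with respect to $\subseteq$, applied to the inclusion $B \subseteq C$; \eqref{i6b} uses $b \ne c$ to ensure $b$ survives in $B \setminus \{c\} \subseteq (B \setminus \{c\}) \cup C$; \eqref{i6c} uses that $b \in B$ forces every element of $C$ to lie in $(C \setminus \{b\}) \cup B$; and \eqref{i6d} is immediate since $b \in B \subseteq C$. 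I expect each to be a one-line verification.
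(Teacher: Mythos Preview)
Your proposal is correct and matches the paper's approach: the paper simply states that ``the statements can be verified directly from the definitions,'' and what you have written is exactly that verification spelled out in detail. Each of your arguments is sound, including the case split in \eqref{i3} and the set-theoretic identities for the remaining items.
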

 
 \begin{proof}
 The statements can be verified directly from the definitions.
 \end{proof}

As in \cref{e:extend}, extensions and the operation $\circ$ provide a way to produce new divisibility relations from old ones in a more general setting, as shown in \cref{p:new-from-old} by \eqref{i3} and \eqref{i1}. This is an indication that in order to describe the set $\R(U)$ it would suffice to know a possibly smaller (ideally, minimal) set of relations that can be used to generate all others. We now proceed to define terminology that helps clarify these issues.

Below we define the set $\D^\circ$ of {\it all} divisibility relations formed  by applying $\circ$ to a fixed set $\D$ of divisibility relations, along with the set $\overline\D$, which will be a closure of $\D^\circ$ under extensions and inclusion of trivial relations.
Since $\circ$ is not associative, each time we use the operation on more than two relations we must specify an order. In other words 
$\big ( (b_1, B_1)\circ (b_2,B_2) \big ) \circ (b_3,B_3)$ 
and $(b_1, B_1)\circ \big ( (b_2,B_2)\circ (b_3,B_3) \big )$ are not necessarily equal.
Therefore, the notation $(b_1, B_1)\circ (b_2,B_2) \circ (b_3,B_3)$ without a determined bracket-order is not well-defined on its own. Thus, every time we use this notation we will either refer to a specific bracket order or include all possible bracket orders.

\begin{definition} 
\label{bar} Fix a finite, nonempty set $\Lambda$. For each subset $\D$ of $\Lambda\times 2^\Lambda$ we define  a set $\D^\circ$ that consists of bracketed compositions
\[
(b_1, B_1)\circ \dots \circ (b_s,B_s)
\] 
where $s \ge 1$ and $(b_i,B_i)\in \D$ for all $i\in [s]$, where we include all possible ways of putting brackets in this expression. 
Further, let  
$$
\D^{\ex}= \bigcup_{(b,B) \in \D} (b,B)^\ex
$$ 
be the set of all elements of $\Lambda\times 2^\Lambda$ that are extensions of an element of $\D$ and 
$$\Lambda^{\triv}=\{(b,B)\st (b,B)\in \Lambda\times 2^\Lambda\,\,\text{and}\,\, b\in B\}$$ 
to be the set of trivial relations. Then define
\begin{align}\label{e:closures}
\overline \D& =(\D^\circ)^\ex\cup \Lambda^{\triv}.
\end{align}  
    If $(b,B)\in \overline \D$, we say that $(b,B)$ {\bf can be deduced from} $\D$.
\end{definition} 

 The next result shows that $\D^\circ$ and $\overline \D$ can be understood as closures.

\begin{proposition}
\label{p:closure}
Let $\Lambda$ be a nonempty set. If $\D\subseteq \Lambda\times 2^\Lambda$, the following hold:
\begin{enumerate}
\item $\D \subseteq \D^{\circ} \subseteq (\D^\circ)^\ex\subseteq \overline{\D}$ and $\Base(\D)=\Base(\D^\circ)=\Base((\D^\circ)^\ex)$; 
\item $(\D^{\circ})^\circ=\D^\circ$, $(\D^{\ex})^{\ex}=\D^\ex$ and $(\Lambda^{\triv})^{\ex}=\Lambda^\triv$; 
\item $(\overline \D)^\ex= \overline \D$;
\item if $(b,B), (c,C) \in \overline \D$, then $(b,B)\circ (c,C)\in \overline \D$. In particular, $(\overline \D)^\circ=\overline \D$; 
\item $\overline{\overline{\D}}=\overline\D$;
\item if $\D\subseteq \R(U)$ for a set of monomials $U$ indexed by $\Lambda$, then $\overline{\D}\subseteq \R(U)$. In particular, $\overline{\R(U)}=\R(U)$.
\end{enumerate}
\end{proposition}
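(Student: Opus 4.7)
The plan is to dispatch items (1)--(3) as bookkeeping from the definitions, reduce (5) and (6) to (4), and devote most of the work to (4), which is the real obstacle because it requires interacting the binary operation $\circ$ with the two-part decomposition $\overline{\D}=(\D^\circ)^\ex\cup\Lambda^\triv$.

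For (1), I would just unwind the definitions: singletons ($s=1$) show $\D\subseteq \D^\circ$; every pair $(b,B)$ is its own extension via $C=B$; and $(\D^\circ)^\ex\subseteq \overline{\D}$ holds by \eqref{e:closures}. The base-set equalities follow because the formula \eqref{e:circ} preserves the first coordinate of its left argument, so the base of any bracketed composition is the base of its leftmost factor, and extensions likewise preserve the first coordinate. For (2), a bracketed composition of bracketed compositions is itself a bracketed composition; inclusion of sets is transitive; and an extension of a trivial relation stays trivial by part (\ref{i6d}) of \cref{p:new-from-old}. For (3), I would invoke the set-theoretic identity $(A\cup B)^\ex=A^\ex\cup B^\ex$ combined with $((\D^\circ)^\ex)^\ex=(\D^\circ)^\ex$ and $(\Lambda^\triv)^\ex=\Lambda^\triv$ from (2).

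The main obstacle is (4). I would run a case analysis based on which of the two components of $\overline{\D}$ each of $(b,B)$ and $(c,C)$ lies in. When both are in $(\D^\circ)^\ex$, write them as extensions of elements $(b,B'),(c,C')\in\D^\circ$ and apply part (\ref{i4}) of \cref{p:new-from-old} twice to land inside $((b,B')\circ(c,C'))^\ex$; since $(b,B')\circ(c,C')$ lies in $\D^\circ$ by construction, the result sits in $(\D^\circ)^\ex$. When $(b,B)$ is trivial, part (\ref{i6b}) of \cref{p:new-from-old} handles $c\ne b$, and the case $c=b$ is direct because $b\in B$ forces $b\in(B\setminus\{b\})\cup C$, keeping the result trivial. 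When $(c,C)$ is trivial (and $(b,B)$ nontrivial), part (\ref{i6c}) of \cref{p:new-from-old} places $(b,B)\circ(c,C)$ in $(b,B)^\ex$, which sits in $\overline{\D}$ by (3). To upgrade from this binary closure to the full statement $(\overline{\D})^\circ=\overline{\D}$, I would induct on the number of factors in a bracketed composition: one with at least two factors splits as $X\circ Y$ with $X,Y$ involving strictly fewer factors, so $X,Y\in\overline{\D}$ by induction and the binary case finishes the argument.

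Items (5) and (6) then follow quickly. For (5), chaining $(\overline{\D})^\circ=\overline{\D}$ from (4) with $(\overline{\D})^\ex=\overline{\D}$ from (3) and $\Lambda^\triv\subseteq\overline{\D}$ gives $\overline{\overline{\D}}=((\overline{\D})^\circ)^\ex\cup\Lambda^\triv=(\overline{\D})^\ex\cup\Lambda^\triv=\overline{\D}$. For (6), parts (\ref{i3}), (\ref{i1}), and (\ref{i6a}) of \cref{p:new-from-old} say precisely that $\R(U)$ is closed under $\circ$, closed under extension, and contains $\Lambda^\triv$; a mild induction on the number of factors then yields $\D^\circ\subseteq\R(U)$, hence $(\D^\circ)^\ex\subseteq\R(U)$, and finally $\overline{\D}\subseteq\R(U)$. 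The equality $\overline{\R(U)}=\R(U)$ drops out by specializing $\D=\R(U)$ and combining with (1).
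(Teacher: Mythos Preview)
Your overall architecture matches the paper's proof almost exactly: items (1)--(3) are handled as definitional bookkeeping, (4) is done by a case split on which piece of $\overline{\D}=(\D^\circ)^\ex\cup\Lambda^\triv$ each factor lies in, and (5)--(6) are deduced from (3) and (4). The ``both in $(\D^\circ)^\ex$'' case, the ``$(c,C)$ trivial'' case via \cref{p:new-from-old}\eqref{i6c}, and the ``$(b,B)$ trivial with $b\ne c$'' case via \eqref{i6b} are all handled correctly and exactly as in the paper.

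There is, however, a genuine error in your treatment of the subcase $(b,B)\in\Lambda^\triv$ with $c=b$. You claim that $b\in B$ forces $b\in(B\setminus\{b\})\cup C$, hence that $(b,B)\circ(b,C)$ is again trivial. This implication is false: $b\notin B\setminus\{b\}$, so $b\in(B\setminus\{b\})\cup C$ would require $b\in C$, which is not assumed. Concretely, with $\Lambda=\{1,2,3\}$, $(b,B)=(1,\{1,2\})$ and $(c,C)=(1,\{2,3\})$, one computes $(b,B)\circ(c,C)=(1,\{2,3\})$, which is not trivial. The paper handles this subcase differently: it invokes \cref{p:new-from-old}\eqref{i5}, which says $(b,B)\circ(b,C)\in(b,C)^\ex=(c,C)^\ex$; since $(c,C)\in\overline{\D}$ and $\overline{\D}$ is closed under extension by (3), the conclusion follows. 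Replacing your sentence with this argument repairs the proof.
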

\begin{proof}
Statements (1) and (2) follow directly from the definitions of the sets involved.

(3) Using the last two equalities in item (2), where the first is applied to $\D^\circ$, we have
\[
({\overline \D})^\ex=((D^\circ)^\ex)^\ex\cup (\Lambda^\triv)^\ex=(\D^\circ)^\ex\cup \Lambda^\triv=\overline \D.
\]

 (4) Assume $(b,B), (c,C) \in \overline\D$. 
 If $(c,C)\in \Lambda^\triv$, then $(b,B)\circ (c,C) \in (b,B)^\ex$ by \cref{p:new-from-old}~\eqref{i6c}. Since $(b,B) \in \overline{\D}$, then $(b,B)\circ (c,C)\in \overline{\D}$ by item~(3).

 Assume $(b,B)\in \Lambda^\triv$. If $b\ne c$, then by \cref{p:new-from-old}~\eqref{i6b} we have that $(b,B)\circ (c,C) \in \Lambda^\triv\subseteq \overline\D$. If $b=c$, then by \cref{p:new-from-old}~\eqref{i5} we have  $(b,B)\circ (c,C)\in (c,C)^\ex$. Since $(c,C) \in \overline{\D}$, then $(b,B)\circ (c,C)\in \overline{\D}$ by item~(3).
 
 It remains thus to consider the case when $(b,B), (c,C)\in (\D^{\circ})^\ex$. There exist then $(b,B'), (c,C')\in \D^{\circ}$ such that $(b,B)\in (b,B')^\ex$ and $(c,C)\in (c,C')^\ex$. Applying 
 \cref{p:new-from-old}~\eqref{i4} twice, we conclude that $(b,B)\circ (c,C)\in ((b,B')\circ (c,C'))^\ex$, and thus $(b,B)\circ (c,C)\in (\D^\circ)^\ex\subseteq \overline\D$.

 (5) We have 
 \[
 \overline{\overline \D}=(({\overline \D})^\circ)^\ex\cup \Lambda^\triv=(\overline \D)^\ex\cup \Lambda^\triv=\overline \D\cup \Lambda^\triv=\overline \D
 \]
 where the second equality comes from item (4) and the third equality comes from item (3). 
 
 (6) The statement follows from \cref{p:new-from-old}~\eqref{i1},~\eqref{i6a} and~\eqref{i3}. 
\end{proof}

In general, $\overline \D$ is a rather large set. The set $\D^\circ$ can sometimes be described in concrete situations. In particular, $\D^\circ =\D$ under either of the following assumptions: 
\begin{itemize}
    \item $\D$ consists of a single element
    \item  $\D=\{(b_1,B), \ldots, (b_d,B)\}$ where $b_1,\ldots,b_d \notin B$.
\end{itemize}
However, as illustrated in \cref{example2.1}, $\D^\circ$ can still be quite large, but not all of the elements of $\D^\circ$ provide new information. 
We thus introduce below concepts of minimality with respect to divisibility relations. 
By definition, extensions of divisibility relations provide a partial order on the set of divisibility relations that is induced by set inclusion. 
A partial order $\leqslant$ on $\Lambda\times 2^\Lambda$ is given by 
\[
(b,B)\leqslant (c,C) \iff (c,C)\in (b,B)^\ex.
\]
We use this order to define minimality.

\begin{definition}[{\bf Minimality and generating sets}] 
\label{d:minimal}
Fix a finite, nonempty set $\Lambda$ and let $\D \subseteq \Lambda\times 2^\Lambda$.
We say that an element $(b,B)\in \D$ is {\bf minimal}
if it is non-trivial and is minimal with respect to $\leqslant$, that is, if $(c,C)\leqslant (b,B)$ and $(c,C)\in \D$, then $(c,C)=(b,B)$. We denote by $\min(\D)$ the set of elements of $\D$ that are minimal. 
If $U$ is a set of monomials indexed by $\Lambda$, we say that a divisibility relation $(b,B)\in \R(U)$ is {\bf minimal} if $(b,B)\in \min(\R(U))$. 

We say that $\D$ is a {\bf generating set} of divisibility relations on $U$ if $\R(U)=\overline \D$. Moreover, $\D$ is a {\bf minimal generating set} of divisibility relations on $U$ if it is a generating set and 
$$
(b,B)\notin \overline{\D\smallsetminus \{(b,B)\}} \qforeach (b,B)\in \D.
$$
In other words, $\D$ is a minimal generating set for $\R(U)$ if $\D$ is minimal with respect to inclusion among the sets with $\R(U)=\overline\D$.
\end{definition}

\begin{remark}
    Observe that for $\D\subseteq \Lambda\times 2^\Lambda$ we have $\min(\overline \D)=\min(\D^\circ)$. 
    Also, note that for any $U$, there exists a minimal generating set of divisibility relations on $U$, and this set can be chosen so that it consists of minimal divisibility relations.
\end{remark}

We provide below a series of examples to show that the concepts defined in \cref{d:minimal} must be handled with care, since there are subtleties that might not be immediately obvious. In particular, minimal generating sets of divisibility relations are not unique, even when they consist of minimal relations. In these examples, given $\D$, it is always possible to choose monomials $u_1, \dots, u_q$ so that $\R(U)=\overline \D$, which we will show in \cref{s:ED satisfies D}.
The first example shows that it is possible to have $\D^\circ\ne \min(\D^\circ)$, even if $\D=\min(\D)$.

\begin{example} \label{example2.1} Let $U=\{u_1, \ldots,  u_6\}$ and assume $u_1\mid \lcm(u_2, u_3)$, $u_2\mid \lcm(u_4,u_5)$ and $u_5\mid \lcm(u_4,u_6)$, corresponding to the set  
\[
\D=\{(1, \{2,3\}), (2,\{4,5\}), (5,\{4,6\})\}\subseteq \R(U)\,.
\]
Note that $\D=\min(\D)$. 
There are additional divisibility relations on $U$ induced by those in $\D$, including $u_1\mid \lcm(u_3,u_4,u_5)$, corresponding to $(1,\{3,4,5\})\in \D^\circ$, where we note that 
\[
(1,\{3,4,5\})=(1,\{2,3\})\circ(2,\{4,5\}).
\]
Many other relations can be deduced from $\D$ via the $\circ$ operation, including $u_2\mid \lcm(u_4,u_6)$ corresponding to $(2, \{4,5\}) \circ (5, \{4,6\})$ and $u_1\mid \lcm(u_3,u_4,u_6)$ corresponding to $(1, \{2,3\}) \circ (2, \{4,6\})$.  
Thus we have 
\[
\D\cup \{(1,\{3,4,5\}), (1,\{3,4,6\}), (2,\{4,6\})\} \subseteq \D^\circ.
\]
However, $\D^\circ$ also contains elements such as $(1,\{2,3\}) \circ (1, \{3,4,5\})= (1, \{2,3,4,5\}) \in (1,\{3,4,5\})^\ex$,  illustrating that not all elements of $\D^\circ$ are minimal, even if $\D=\min(\D)$.  

Continuing in the manner, $\D^\circ$ is fairly large, however one can check that 
$$\min(\D^\circ)=\D\cup \{(1,\{3,4,5\}), (1,\{3,4,6\}), (2,\{4,6\})\}.$$  
Furthermore, if  $u_i$ are chosen such that $\R(U)=\overline \D$ (e.g. $u_i=\ed{i}$ as in \cref{d:D-def}), then $\D$ is a minimal generating set of divisibility relations on $U$ by \cref{p:closure} (1).
\end{example}

\begin{example}
Assume $U=\{u_1, \ldots, u_6\}$ satisfies $u_1\mid \lcm(u_2,u_3,u_4)$, $u_2\mid \lcm(u_3,u_4)$, describing the set
\[
\D=\{(1,\{2,3,4\}), (2,\{3,4\})\}.
\]
Then $\D\cup\{(1, \{3,4\})\}\subseteq \D^\circ$. We have $(1,\{2,3,4\})\in \min(\D)$ but is not in $\min(\D^\circ)$.  Thus it is possible to have  $\min(\D)\not\subseteq \min(\D^\circ)$. Further, assume that the monomials $u_i$ are chosen so that $\R(U)=\overline \D$. Then both $\D$ and 
\[
\D'=\{(1,\{3,4\}), (2,\{3,4\})\}
\]
are minimal generating sets of divisibility relations on $U$. Thus a minimal generating set of divisibility relations on $U$ is not unique. This example also shows that the relations in a minimal generating set of divisibility relations on $U$ do not need to be minimal in $\R(U)$.
\end{example}

\begin{example}
Assume $U=\{u_1, \ldots, u_6\}$ where $u_i$ have been chosen such that the set
 \[
\D=\{(1,\{2,3\}), (2,\{3,4\}), (4,\{2,3\})\}
\] 
satisfies $\R(U)=\overline \D$. 
Then $(1,\{3,4\})\in \D^\circ$, and observe that both $\D$ and
\[
\D'=\{(1,\{3,4\}), (2,\{3,4\}), (4,\{2,3\})\}
\]
are minimal generating sets of divisibility relations on $U$ consisting of minimal relations. Observe also that $\min(\R(U))=\D\cup \{(1,\{3,4\})\}$ is a non-minimal generating set.
\end{example}

\section{{\bf The $\D$-Extremal ideal $\ED$}}\label{sec:D-extremal}  \label{s:ED satisfies D} 

Introduced in \cite{Lr}, the {\it $q$-extremal ideal} $\E_q$ is a square-free monomial ideal with $q$ generators, which encodes the algebraic behavior of all square-free monomial ideals with $q$ generators, as well as their powers. In this section, we will refine such ideals by adding divisibility relations to the definition. In short, given a set $\D$ of divisibilities between $q$ monomials in any polynomial ring, the $\D$-extremal ideal $\ED$ is a square-free monomial ideal derived from $\E_q$ which, as shown in \cref{t:power1-rels}, satisfies  the divisibility relations in $\D$ and those deduced from $\D$, but no others. We also state in \cref{t:when-equal} equivalent conditions for when two extremal ideals $\E_{\D_1}$ and $\E_{\D_2}$ are equal. 
Before stating the definitions precisely, we first recall the extremal ideals that we wish to fine-tune.

Let $\sfk$ be a field. For an integer $q>0$ we define the polynomial ring $S_{[q]}$ as 
\begin{equation}\label{e:Sq}
S_{[q]}=\sfk[y_A\st \emptyset\ne A \subseteq [q]].
\end{equation}
The {\bf $\pmb{q}$-extremal ideal} is the square-free monomial ideal $\E_q = (\epsilon_1,\ldots, \epsilon_q)$, with $\epsilon_i$ in $S_{[q]}$ defined as 
\begin{equation}\label{e:oldextremal}
  \epsilon_i= \prod_{\substack{\emptyset\ne A \subseteq [q]\\ i \in A}} y_A.
  \end{equation}
  For example, when $q=4$, the ideal $\E_4$ is generated by the monomials $\epsilon_i=y_i y_{ij}  y_{ik} y_{il} y_{ijk} y_{ijl} y_{ikl} y_{ijkl}$, where $i,j,k,l$ are distinct elements in $\{1, 2, 3, 4\}$ and 
  $y_{\{i\}}$ and $y_{\{i,j\}}$ are written $y_i$ and $y_{ij}$, respectively, to simplify notation.

The $q$-extremal ideal is designed to capture the algebraic behavior of  {\it any} square-free monomial ideal with $q$ generators.
We now refine the definition by removing some variables from the generators, so that the resulting ideal satisfies desired divisibility relations.

\begin{definition}[{\bf $\D$-extremal ideals}]\label{d:D-def} 
Let $q>0$ and $d\geq0$.  Set 
\begin{equation}\label{e:oneOK}
\D=\{(b_1,B_1),\ldots,(b_d,B_d)\} \subseteq [q]\times (2^{[q]}\ssm\{\emptyset\}). 
\end{equation}
When $d > 0$, we also write $\D=\big \{(b_j,B_j) \mid j \in [d] \big \}$, and we set $\D = \emptyset$ when $d=0$.
Define 
\begin{align*}
Q(\D) &=\{A\subseteq [q] \, \st  A\neq \emptyset, \mbox{ and } A\cap B\ne\emptyset 
\mbox{ for all } (b,B)\in \D \mbox{ with } b\in A\}\\
&=\{A\subseteq [q] \, \st A\neq \emptyset, \text{ and for all } j\in [d],\   b_j\notin A \text{ or } A\cap B_j\ne\emptyset\}.
\end{align*}
For $i \in [q]$, we  define square-free monomials 
\[
\ed{i}={\displaystyle \prod_{\substack{A\in Q(\D)\\ i\in A}}y_A}\,.
\]
The {\bf $\D$-extremal ideal} is defined as the square-free monomial ideal in the polynomial ring $S_{[q]}$ generated by the $\ed{i}$, namely
$$\ED=(\ed{1}, \ \ldots , \ \ed{q}).$$ 
\end{definition}
Observe that if $\D$ consists only of trivial relations, then $\ED = \E_{\emptyset} = \E_q$. 

\begin{example} \label{mainex} 
When $q=4$, set $\D = \{(1,\{2,3\})\}$. Then $d=1$, $B = \{2,3\}$, and $b_1 = 1$. Since the set $Q(\D)$ consists of all non-empty subsets of $[4]$ except $\{1\}$ and $\{1, 4\}$, and thus the variables $y_1$ and $y_{14}$ do not appear in any generator of the ideal, the ideal $\ED$ in $S_{[4]}$ is generated by the monomials
$$
\begin{array}{llll}
\ed{1}&=y_{12}y_{13}y_{123}y_{124}y_{134}y_{1234}&
\ed{2}&=y_2y_{12}y_{23}y_{24}y_{123}y_{124}y_{234}y_{1234}\\
\ed{3}&=y_3y_{13}y_{23}y_{34}y_{123}y_{134}y_{234}y_{1234}&
\ed{4}&=y_4y_{24}y_{34}y_{124}y_{134}y_{234}y_{1234}.
\end{array}
$$
Note that $\ed{1} \mid \lcm (\ed{2}, \ed{3})$.  
\end{example}

 We now show that when each $B_i$ has at least two elements in \cref{d:D-def}, $\ED$ is minimally generated by $\ed{1},\ldots,\ed{q}$. Thus in this setting, $\R(\ED)$ is defined using this minimal generating set under this ordering. 
 Also, we show that adding redundant relations to $\D$ does not affect the variable set $Q(\D)$, and thus does not change the ideal $\ED$. 

\begin{proposition}\label{p:mingens-simple}
    If $\D$ is as in \eqref{e:oneOK} with the additional assumption that $|B_j| \geq  2$ for each $j \in[d]$, then the following hold:
\begin{enumerate}
\item $\ED$ is minimally generated by $\ed{1},\ldots,\ed{q}$;
\item $Q(\D)=Q(\D\cup\{(b,B)\})$ for  $(b,B)\in \R(\ED)$; 
\item if $(b,B)\in \R(\ED)$ and $b\notin B$, then $b\in \Base(\D)$. 
\end{enumerate}\end{proposition}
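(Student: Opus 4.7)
The plan is to handle the three items in order, each reducing to a direct verification of which subsets $A\subseteq [q]$ lie in $Q(\D)$ and therefore contribute a variable $y_A$ to the $\ed{i}$.

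For item~(1), I would show that $\ed{j}\nmid \ed{i}$ for each pair $i\ne j$ in $[q]$. The witness set is $A=[q]\setminus\{i\}$: because $|B_k|\ge 2$ for every $k\in[d]$, no $B_k$ can be contained in the singleton $\{i\}$, so $B_k\cap A\ne\emptyset$ for all $k$. Hence the constraints defining $Q(\D)$ are all satisfied, giving $A\in Q(\D)$. The variable $y_A$ then divides $\ed{j}$ (since $j\in A$) but does not divide $\ed{i}$ (since $i\notin A$), which rules out $\ed{j}\mid \ed{i}$.

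For item~(2), the inclusion $Q(\D\cup\{(b,B)\})\subseteq Q(\D)$ is immediate because adding a divisibility relation only strengthens the defining conditions. For the reverse inclusion I would take $A\in Q(\D)$ with $b\in A$ and show $A\cap B\ne\emptyset$. Since $A\in Q(\D)$ and $b\in A$, the variable $y_A$ appears in $\ed{b}$. The hypothesis $(b,B)\in\R(\ED)$ means $\ed{b}\mid \lcm(\ed{j}\st j\in B)$, so $y_A\mid \ed{j}$ for some $j\in B$, which in turn forces $j\in A$. Thus $j\in A\cap B$, as required.

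For item~(3), I would argue by contradiction: assume $(b,B)\in\R(\ED)$ with $b\notin B$ but $b\notin\Base(\D)$. Then no index $k\in[d]$ has $b_k=b$, so the condition defining $Q(\D)$ is vacuous on the singleton $\{b\}$, and $\{b\}\in Q(\D)$. Consequently $y_{\{b\}}$ divides $\ed{b}$; but because $b\notin B$, for every $j\in B$ we have $b\notin\{j\}$ is irrelevant—rather, $y_{\{b\}}\mid \ed{j}$ would force $j\in\{b\}$, i.e.\ $j=b$, which is excluded by $b\notin B$. Hence $y_{\{b\}}\nmid \lcm(\ed{j}\st j\in B)$, contradicting $(b,B)\in\R(\ED)$.

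The proof has no conceptual obstacle; the only care needed is to choose the right witness set $A$ in each item ($A=[q]\setminus\{i\}$ for~(1), the hypothesized $A$ in~(2), and $A=\{b\}$ for~(3)), after which every assertion reduces to a one-line check against the definitions of $Q(\D)$ and $\ed{i}$.
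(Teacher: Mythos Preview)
Your proof is correct and follows the same overall strategy as the paper: exhibit a witness set $A\in Q(\D)$ whose variable $y_A$ separates the relevant generators. Items~(2) and~(3) are essentially identical to the paper's arguments (the paper routes~(3) through~(2) rather than contradicting the divisibility directly, but this is cosmetic).

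For item~(1), your choice of witness is cleaner than the paper's. The paper distinguishes whether $i\in\Base(\D)$ and, in the harder case, builds $A=(\Base(\D)\smallsetminus\{j\})\cup\{k_1,\dots,k_d\}$ by choosing $k_u\in B_u\smallsetminus\{j\}$ for each $u$. Your uniform choice $A=[q]\smallsetminus\{i\}$ works in all cases at once, since $|B_k|\ge 2$ already forces $B_k\cap A\ne\emptyset$ regardless of where $b_k$ sits. This buys you a shorter argument with no case split; the paper's construction, on the other hand, yields a witness of smaller cardinality, which is of no consequence here.
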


\begin{proof} 
 (1) Suppose  $\ed{i}\mid \ed{j}$ for some $i,j\in [q]$ with $i \neq j$.
  If $i \notin \Base(\D)$, define $A=\{i\} \in Q(\D)$. If $i \in \Base(\D)$, then since $|B_u| \ge 2$, for each $u \in [d]$ we fix an element $k_u\in B_u\ssm\{j\}$. Then define $A=(\Base(\D)\ssm\{j\})\cup\{k_1,\ldots,k_d\} \in Q(\D)$. In either case, note that $A \in Q(\D)$ is such that $i\in A$ and $j\notin A$, 
leading to  the contradiction $y_A \mid \ed{i}$ but $y_A\nmid\ed{j}$. 

(2)  The inclusion $Q(\D) \supseteq Q(\D\cup\{(b,B)\})$ is immediate from the definition of $Q(\D)$. 
For the reverse inclusion, assume $A\in Q(\D)$. Consider two cases: if  $b\notin A$, since
$\Base(\D\cup \{(b,B)\})=\Base(\D)\cup \{b\}$, we have $A\in Q(\D\cup \{(b,B)\})$ by definition. Now suppose $b \in A$. Since $(b,B) \in \R(\ED)$ we have $\ed{b}\mid \lcm(\ed{i}\st i\in B)$. Then $y_A\mid \ed{b}$, and hence $y_A\mid \ed{i}$ for some $i\in B$. It follows that $i\in A$, so $A\cap B \ne \emptyset$ . Thus $A \in Q(\D \cup \{(b,B)\})$ and $Q(\D) \subseteq Q(\D \cup \{(b,B)\})$.

(3) Assume $b\notin B$ and $b\notin \Base(\D)$.  Set $A=\{b\}$. Since $b_{j} \not\in A$ for all $j\in [d]$, we have $A\in Q(\D)$. This implies  $A \in Q(\D \cup \{(b,B)\})$ by (1), so $A\cap B\ne\emptyset$ and hence $b\in B$, a contradiction.  
\end{proof}

Our main theorem in this section is that  $\ED$   satisfies exactly those relations deduced from $\D$, and nothing more, as we state formally in \cref{t:power1-rels}. One direction, that $\ED$ satisfies all relations deduced from $\D$, is straightforward to see. The proof of the other direction will require some sophisticated machinery.

\begin{theorem}[{\bf The divisibility relations of  $\ED$}]\label{t:power1-rels} 
 Let $q$, $d$  be positive integers and 
 \begin{equation}
\label{eq:D-def}
 \D=\{(b_1,B_1),\ldots,(b_d,B_d)\} \subseteq [q]\times 2^{[q]} 
 \qwith 
 |B_i| \geq 2 
 \qforall i \in [d].
 \end{equation}
Then $\R(\ED)=\overline \D$. In other words, the set of divisibility relations of $\ED$ consists of all relations that can be deduced from $\D$. 
\end{theorem}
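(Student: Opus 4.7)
The inclusion $\overline{\D} \subseteq \R(\ED)$ reduces to checking $\D \subseteq \R(\ED)$, since \cref{p:closure}(6) says $\R(\ED)$ is already closed under the operations that define $\overline{(-)}$. For $(b,B) \in \D$, any variable $y_A$ dividing $\ed b$ satisfies $A \in Q(\D)$ and $b \in A$, so the defining condition of $Q(\D)$ applied to the relation $(b,B)$ itself gives $A \cap B \neq \emptyset$, and picking $i \in A \cap B$ shows $y_A \mid \ed i$; hence $\ed b \mid \lcm(\ed i \st i \in B)$.

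For the reverse inclusion $\R(\ED) \subseteq \overline{\D}$, let $(b,B) \in \R(\ED)$. If $b \in B$ the relation is trivial, so assume $b \notin B$ and set $T = [q] \setminus B$, noting $b \in T$. The plan is to reformulate the divisibility $\ed b \mid \lcm(\ed i \st i \in B)$ in terms of $Q(\D)$: it is equivalent to the statement that no nonempty $A \subseteq T$ containing $b$ belongs to $Q(\D)$. By \cref{p:mingens-simple}(3), $b \in \Base(\D)$, and one checks that $\D$ must in fact contain a \emph{non-trivial} relation at $b$, for otherwise $A = \{b\}$ would itself be in $Q(\D)$, and the divisibility would then force $b \in B$. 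I would then argue by induction on $|T|$.

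The base case $|T|=1$ forces $T = \{b\}$, so any non-trivial $(b,B') \in \D$ satisfies $B' \subseteq [q]\setminus\{b\} = B$, giving $(b,B) \in (b,B')^{\ex} \subseteq \overline{\D}$. For the inductive step, I apply the reformulated condition to $A = T$ itself, which cannot lie in $Q(\D)$: this produces some $(c,B') \in \D$ with $c \in T$ and $B' \cap T = \emptyset$, i.e.\ $B' \subseteq B$. If $c = b$ we are done. If $c \neq b$, then $B \cup \{c\}$ has strictly smaller complement $T \setminus \{c\}$ and still satisfies $(b, B \cup \{c\}) \in \R(\ED)$ by extension, so the inductive hypothesis yields $(b,B'') \in \D^\circ$ with $B'' \subseteq B \cup \{c\}$. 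If $c \notin B''$ then $B'' \subseteq B$ and we are done; otherwise the composition $(b,B'') \circ (c,B') = (b, (B''\setminus\{c\}) \cup B')$ has second component contained in $B$ and lies in $\overline{\D}$ by closure under $\circ$ (\cref{p:closure}(4)), so $(b,B)$ is an extension of an element of $\overline{\D}$. The main obstacle is choosing the correct inductive parameter and the right set $A$ to probe: testing $A = T$ itself is exactly strong enough to extract a ``bridge'' relation $(c,B')$ with $B' \subseteq B$, and the operation $\circ$ is tailor-made to splice this bridge with the weaker relation at $b$ produced by the inductive hypothesis.
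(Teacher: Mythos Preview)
Your proof is correct and takes a genuinely different route from the paper's. The paper builds a labeled rooted ``decision tree'' $T_{(b,B)}$ of height up to $2d$, assigns each vertex a good/bad status, proves a lemma (\cref{l:pull-apart}) that a good even vertex labeled $b'$ yields $(b',B)\in\overline\D$, and then, assuming the root is bad, manufactures a witness set $A\in Q(\D)$ with $b\in A$ and $A\cap B=\emptyset$ from the ``extremely bad'' vertices to reach a contradiction. Your argument bypasses all of this: the single observation that the divisibility is equivalent to ``no $A\subseteq T=[q]\smallsetminus B$ with $b\in A$ lies in $Q(\D)$'' lets you probe directly with $A=T$, extract a relation $(c,B')\in\D$ with $B'\subseteq B$, and then either finish (if $c=b$) or shrink $T$ by one element and splice via $\circ$. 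The induction on $|T|$ is exactly the right parameter, and the closure properties of \cref{p:closure}(3),(4) do the bookkeeping that the paper's tree does structurally.

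What each approach buys: your argument is shorter, needs no auxiliary construction or lemma, and makes transparent why the operation $\circ$ is exactly what is needed (it absorbs the ``bridge'' index $c$). The paper's decision tree, while heavier here, makes the recursive structure of the deduction explicit as a combinatorial object, which may be what the authors want for the higher-power case $r\ge 2$ they allude to; your induction on $|T|$ is tied to the $r=1$ situation. One small stylistic point: your base case $|T|=1$ is actually subsumed by the inductive step (probing $A=T=\{b\}$ lands you in the $c=b$ case immediately), so the induction could start at $|T|=1$ with no separate base case.
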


The rest of this section is dedicated to the proof of \cref{t:power1-rels} and some of its consequences. Since we will need to keep track of the indices in $[q]$ and those in $[d]$ that index the elements of $\D$ separately throughout this section, we create an indexing set $\{\lambda_1, \ldots, \lambda_d\}$ of $d$ distinct elements that we will use instead of $1,\ldots,d$ when indexing $\D$. We assume the following setup throughout for positive integers $q$ and $d$. 
\begin{equation}\label{eq:setup}
\begin{aligned}
&\D = \{(b_{\lambda_1},B_{\lambda_1}),\ldots,(b_{\lambda_d},B_{\lambda_d})\}\subseteq [q]\times 2^{[q]} \qwith  |B_i| \geq 2 
 \qforall i \in [d],\\
&\Base(\D) = \{b_{\lambda_1},\ldots,b_{\lambda_d}\},\\
&\nu(b) = \{\lambda_i \st i \in [d], \   b_{\lambda_i}=b\} \qfor b \in \Base(\D).
\end{aligned}
\end{equation}

 To show that $ \R(\ED)=\overline{\D}$, we will construct  a {\it decision} tree $T_{(b,B)}$, which will encode the divisibility relations starting from a given relation $(b,B)$.  As preparation for \cref{c:decision} we remind the reader of some terminology regarding trees. 
A tree is a connected acyclic graph.  A labeled tree is a tree for which each vertex is assigned a label, which is an element of a specific set. The distance between two vertices of a tree is the length (i.e. the number of edges) of the unique path between them.  We say that a tree is {\bf rooted} if one of the vertices is identified as being the root. We say that a vertex of a rooted tree is {\bf odd}, respectively {\bf even}, if its distance to the root (also called the {\bf height} of the vertex) is odd, respectively even. The height of a rooted tree is the largest height of a vertex. If $v$ is a vertex of height $i$ in a rooted tree $T$, then a {\bf child} of $v$ is a vertex $v'$ of height $i+1$ that is adjacent to $v$ in $T$; in this case, we also say $v$ is a {\bf parent} of $v'$.  An {\bf ancestor} of $v$ is a vertex other than $v$ that lies on the path from $v$ to the root.

\begin{construction}[{\bf $\D$-decision tree}] \label{c:decision} Assume the setup in \eqref{eq:setup}.
For a fixed nonempty subset $B$ of $[q]$, and $b \in \Base(\D)$ we construct a rooted tree $T_{(b,B)}$ whose root is labeled $b$, the vertices of even height are labeled by elements  of $\Base(\D)$, and vertices of odd height are labeled by members of the set $\{\lambda_i \st i \in [d]\}$.
 
For a vertex $v$ of $T_{(b,B)}$ of odd height,  let
  $C(v)$ denote the set of labels of all the even ancestors of $v$.
  
Starting from an empty set of vertices we build $T_{(b,B)}$ as described below.

\begin{itemize}
\item (The root: height $0$ vertex) 
Add a vertex $v$ as root and label it $b$. 

\item Starting with $k = 1$ 
\begin{itemize}
\item (adding height $2k-1$ vertices) To each vertex of height $2k-2$ labeled $b' \in \Base(\D)$ add a child labeled $\lambda_i$ for each $\lambda_i\in \nu(b')$.  
\item (adding height $2k$ vertices) 
For each vertex $v$ of height $2k-1$ with label $\lambda_i$, 
\begin{itemize}
    \item if $\emptyset \neq (B_{\lambda_i} \ssm B)  \subseteq \Base(\D)\ssm C(v)$, add a child labeled $b'$ for each $b'\in B_{\lambda_i}\ssm B$; 
    \item otherwise,  the vertex $v$ remains childless and will become a leaf in the output tree.
\end{itemize}
\end{itemize}
\end{itemize}

We call $T_{(b,B)}$ a $\D$-{\bf decision tree} relative to $(b,B)$. 
\end{construction}

\begin{example} 
Let $\D=\{(1,\{2,3\}), (3,\{1,5\}), (3,\{4,5\})\}$ and $B=\{2,5\}$. Thus $\Base(\D)=\{1,3\}$, with $b_{\lambda_1}=1, b_{\lambda_2}=b_{\lambda_3}=3$.
So
$$\nu(1)=\{\lambda_1\}, \quad 
\nu(3)=\{\lambda_2,\lambda_3\}, \quad
B_{\lambda_1}\ssm B = \{3\}, \quad 
B_{\lambda_2} \ssm B =\{1\}, \quad 
B_{\lambda_3} \ssm B =\{4\}.$$
Then we have the following decision trees.

\begin{center}
\begin{tikzpicture}[scale=.75]
\coordinate[label=center:{\underline{Height}}] () at (-3.5,1);
\coordinate[label=center:{\small $0$}] () at (-3.5,0);
\coordinate[label=center:{\small $1$}] () at (-3.5,-1);
\coordinate[label=center:{\small $2$}] () at (-3.5,-2);
\coordinate[label=center:{\small $3$}] () at (-3.5,-3);

\coordinate[label=center:{\underline{$T_{(1,\{2,5\})}$}}] () at (0,1);
\coordinate[label=right:{\small $1=b_{\lambda_1}$}] (A) at (0,0);
\coordinate[label=left:{\small $\lambda_1$}] (B) at (0,-1);
\coordinate[label=right:{\small $\ 3=b_{\lambda_2}=b_{\lambda_3}$}] (C) at (0,-2);
\coordinate[label=left:{\small $\lambda_2$}] (D) at (-1,-3);
\coordinate[label=right:{\small $\lambda_3$}] (E) at (1,-3);

\coordinate[label=center:{\underline{$T_{(3,\{2,5\})}$}}] () at (8,1);
\coordinate[label=right:{{\small $3=b_{\lambda_2}=b_{\lambda_3}$}}] (AA) at (8,0);
\coordinate[label=left:{\small $\lambda_2\,$}] (DD) at (7,-1);
\coordinate[label=right:{\small $\lambda_3$}] (EE) at (9,-1);
\coordinate[label=left:{\small $\lambda_1$}] (BB) at (5,-3);
\coordinate[label=right:{\small $\ 1=b_{\lambda_1}$}] (CC) at (6,-2);

\draw[-] (A) -- (B);
\draw[-] (B) -- (C);
\draw[-] (C) -- (D);
\draw[-] (C) -- (E);
\draw[-] (AA) -- (DD);
\draw[-] (AA) -- (EE);
\draw[-] (CC) -- (DD);
\draw[-] (CC) -- (BB);
\foreach \x in {A,C,AA,CC} 
\draw[black, fill=black] (\x) circle(0.1);
\foreach \x in {B,D,E,BB,DD,EE} 
\draw[black, fill=black] (\x) circle(0.1);
 \end{tikzpicture}
\end{center}
The decision tree stops at height~$3$, since for every vertex $v$ of height~$3$,  $\Base(\D) \ssm C(v)=\emptyset$. 

\end{example}

We now argue that the tree $T_{(b,B)}$ constructed above is finite.  Since all the sets $B_{\lambda_i}$ as well as
$\Base(\D)$ are finite, to show $T_{(b,B)}$ is finite it suffices to show that its height is finite. 
Indeed, for any path from a vertex to the root, the even vertices are labeled by distinct elements of $\Base(\D)$, since each time a new even vertex is added when following \cref{c:decision} its label avoids the labels of its even ancestors. Since the even vertices are labeled by elements of $\Base(\D)$ and $\Base(\D)$ has $d$ elements, a path from the root can have at most $d$ even vertices, and at most $2d$ vertices in total. Thus the tree has height at most $2d$. The construction also dictates that an even vertex has at least one odd child, and thus all leaves of the tree must be odd, and hence the height of the tree is also odd.

For a $\D$-decision tree $T_{(b,B)}$ relative to a relation $(b,B)$, we next define notions of a good vertex and of a bad vertex. We define these recursively, starting with the leaves. Notice that all leaves must be odd, and thus they are labeled by elements of $[d]$.

\begin{definition}\label{d:goodbad}
 Consider a $\D$-decision tree $T_{(b,B)}$. Starting from the leaf vertices, we will assign a value of {\bf bad} or {\bf good} to each vertex $v$ of $T_{(b,B)}$. Below we determine when a vertex is good, and otherwise it is considered bad.
 
\begin{itemize}

\item If $v$ is a leaf labeled  $\lambda_i$, then 
 $v$ is  good if $B_{\lambda_i}\subseteq B$;

 Now assume that a notion of good has been defined for all vertices of height $2k+1 \leq$ height($T_{(b,B)}$).  Then we define: 

\item If $\height(v)=2k$, then $v$ is good if at least one of its children is good; 

\item If $\height(v)=2k-1$ and $v$ is a non-leaf, then $v$ is good if  all its children are good. 
\end{itemize}

We say that a bad vertex $v$ is {\bf extremely bad} if all of its ancestors are bad.

\end{definition} 

\begin{lemma}\label{l:pull-apart}  If $b'$ is the label of a good even vertex  of $T_{(b,B)}$ in  \cref{c:decision},  then  $(b',B)\in \overline \D$. 
\end{lemma}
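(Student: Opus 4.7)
The plan is to induct on the height $h$ of the subtree of $T_{(b,B)}$ rooted at the good even vertex $v$ labeled $b'$. Since $v$ is good, \cref{d:goodbad} guarantees at least one good child $w$ of $v$; by \cref{c:decision}, $w$ is odd and labeled by some $\lambda_i \in \nu(b')$, so $b_{\lambda_i} = b'$. In the base case $h = 1$, every child of $v$ is a leaf, so in particular the good child $w$ is a good leaf, and the leaf clause of \cref{d:goodbad} forces $B_{\lambda_i} \subseteq B$. Then $(b', B) = (b_{\lambda_i}, B)$ lies in $(b_{\lambda_i}, B_{\lambda_i})^{\ex}$, and since $(b_{\lambda_i}, B_{\lambda_i}) \in \D$, we obtain $(b', B) \in \D^{\ex} \subseteq \overline{\D}$ by \cref{p:closure}(1).

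For the inductive step $h > 1$, pick a good child $w$ of $v$. If $w$ happens to be a leaf, the base-case argument applies verbatim. Otherwise $w$ is a good odd non-leaf, so by \cref{d:goodbad} \emph{every} child of $w$ is good. By \cref{c:decision}, these children are the even vertices labeled by the elements of the nonempty set $B_{\lambda_i} \setminus B$; write $B_{\lambda_i} \setminus B = \{b''_1, \ldots, b''_m\}$. Each such child has subtree height at most $h - 2$, so the induction hypothesis yields $(b''_j, B) \in \overline{\D}$ for every $j \in [m]$. Starting from $(b_{\lambda_i}, B_{\lambda_i}) \in \D \subseteq \overline{\D}$ and using left bracketing, compose successively with $(b''_1, B), \ldots, (b''_m, B)$ via the operation $\circ$ of \eqref{e:circ}. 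Since each $b''_j$ lies outside $B$ and inside $B_{\lambda_i}$, a direct computation shows that after the $j$th composition the second coordinate equals $B \cup \{b''_{j+1}, \ldots, b''_m\}$, so that after all $m$ steps the result is exactly $(b_{\lambda_i}, B) = (b', B)$. Iterated application of \cref{p:closure}(4) keeps every intermediate term in $\overline{\D}$, giving $(b', B) \in \overline{\D}$ and completing the induction.

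The main obstacle is the careful bookkeeping with the non-associative, non-commutative operation $\circ$: one must commit to a specific (left-to-right) bracketing and repeatedly invoke the set-theoretic identity $(X \setminus \{c\}) \cup B = X \setminus \{c\}$ when $c \notin B$ combined with $X \supseteq (B \cap B_{\lambda_i})$, which is exactly what propagates the reduction $B_{\lambda_i} \rightsquigarrow B$ through the chain of compositions while staying inside $\overline{\D}$.
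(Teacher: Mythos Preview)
Your proof is correct and follows essentially the same approach as the paper: both pick a good odd child $w$ of the good even vertex, handle the leaf case via the extension $(b',B)\in (b',B_{\lambda_i})^{\ex}$, and in the non-leaf case apply the induction hypothesis to the (good) children of $w$ labeled by the elements of $B_{\lambda_i}\setminus B$, then compose from left to right via $\circ$ to collapse $(b',B_{\lambda_i})$ down to $(b',B)$ inside $\overline\D$. The only cosmetic difference is the induction parameter---you use the height of the subtree rooted at $v$, while the paper uses the distance of $v$ from the bottom of $T_{(b,B)}$---and you invoke \cref{p:closure}(4) step by step where the paper packages the same conclusion as $\overline{\overline\D}=\overline\D$.
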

\begin{proof}  Let  $b' \in \Base(\D)$ be the label of the good vertex $v$ of even height. As $v$ is an even good vertex, it has at least one good child $w$ labeled with some $\lambda_i \in \nu(b')$. In particular, $b_{\lambda_i}=b'$ and $(b',B_{\lambda_i}) \in \D$.

Assuming  $\height(T_{(b,B)})=2k+1$ and $\height(v)=2(k-\ell)$, we prove the statement by induction on $\ell \in \{0,\ldots,k\}$.

The base case is when  $\ell=0$, so $w$ must be a good leaf. Then  $B_{\lambda_i}\subseteq B$, and thus  \[
(b', B)\in (b', B_{\lambda_i})^\ex \subseteq \D^\ex\subseteq (\D^\circ)^\ex\subseteq \overline \D.
\]
Assume now $\ell\ge 1$. If $w$ is a leaf, then by the same argument as above we get  $(b', B)\in \overline \D$. Assume $w$ is not a leaf,   and thus 
$\emptyset \ne B_{\lambda_i}\ssm B\subseteq \Base(\D)$. 
Then all the children of $w$, which are good even vertices of height $2(k-(\ell-1))$, are  labeled with the elements of  
\[
B_{\lambda_i}\ssm B=\{c_1, \dots, c_s\}\, \qforsome s \geq 1.
\]
By the induction hypothesis we have $(c_j, B)\in \overline \D$ for all $j\in [s]$.
Since $(b',B_{\lambda_i})\in \D \subseteq \overline{\D}$, we can construct the divisibility relation
\begin{equation}
\label{element}
(b',B_{\lambda_i})\circ (c_1,B)\circ \dots \circ (c_s,B) \in \overline{\overline{\D}}
\end{equation}
where the order in which the operations are performed is assumed to be from left to right. For example, 
$$
(b',B_{\lambda_i})\circ (c_1, B)\circ (c_2,B)= (b',B_{\lambda_i}\ssm \{c_1\} \cup B) \circ (c_2,B) =
\big( b', \big((B_{\lambda_i}\ssm \{c_1\} \cup B) \ssm \{c_2\}\big) \cup B\big).
$$
Since $c_j \notin B$ for all $j \in [s]$,  the relation in \eqref{element} reduces to   
$$
\big (b', (B_{\lambda_i}\ssm \{c_1,\ldots,c_s\}) \cup B \big )=
(b',B) \in \overline{\overline \D}. 
$$
Since $\overline{\overline \D}=\overline \D$ by \cref{p:closure}, it follows that $(b',B) \in \overline{\D}$.  
\end{proof}

We are now ready to prove \cref{t:power1-rels}, that is, we will show that 
$\R(\ED)=\overline{\D}$ where 
$$
\D = \{(b_{\lambda_1},B_{\lambda_1}),\ldots,(b_{\lambda_d},B_{\lambda_d})\}.
$$ 

\begin{proof}[{\bf Proof of \cref{t:power1-rels}}] We prove each inclusion separately.

\medskip

\noindent{\underline{$\overline{\D} \subseteq \R(\ED)$}.} 
We first show that  $\D \subseteq \R(\ED)$ by proving that 
$$
\ed{{b_{\lambda_j}}}\mid \lcm(\ed{i}\st i\in B_{\lambda_j}) \qforall j\in [d]. 
$$  
    Fix $j\in [d]$. Let $A\in Q(\D)$ be such that $y_A\mid \ed{{b_{\lambda_j}}}$. Then $b_{\lambda_j}\in A$. By the definition of $Q(\D)$, we must have $A\cap B_{\lambda_j}\ne\emptyset$.  Let $i\in A\cap B_{\lambda_j}$. Then the definition of $\ed{i}$ implies $y_A\mid \ed{i}$, and hence $y_A\mid \lcm(\ed{i}\st i\in B_{\lambda_j})$. This implies $\D\subseteq \R(\D)$.     
    The rest now follows from \cref{p:closure}~(6). 

\medskip

\noindent{\underline{$\R(\ED) \subseteq \overline{\D}$}. }
Let $(b,B)\in \R(\ED)$. If $(b,B)$ is trivial, then $(b,B) \in [q]^\triv \subseteq \overline{\D}$. Thus we assume $(b,B)$ is not trivial. Then $b \notin B$, so using \cref{p:mingens-simple}~(3), we see that  $b\in \Base(\D)$. If the root of the decision tree $T_{(b,B)}$ is good, then from \cref{l:pull-apart} we have $(b,B) \in \overline{\D}$ and we are done. It suffices thus to assume that the root of $T_{(b,B)}$ is bad. We show below that this is not possible.

Since $(b,B)$ is a divisibility relation on $\ED$, \cref{p:mingens-simple}~(2) implies that for every $\emptyset \ne A \subseteq [q]$ we have: 
\begin{equation}
\label{e:A}
  A\in Q(\D) \qand b\in A \quad \implies \quad A\cap B\ne \emptyset.
\end{equation}
We will obtain a contradiction by constructing a set $A\in Q(\D)$ such that $b\in A$  but $A\cap B=\emptyset$. 

 For each bad leaf $v$ (which is necessarily odd) with label $\lambda_i$ we have $\emptyset\ne B_{\lambda_i}\ssm B\not\subseteq \Base(\D)\ssm C(v)$, in view of \cref{d:goodbad} and \cref{c:decision}. So we define an element $\mu(v)\in [q]$ such that 
\[
\mu(v)\in B_{\lambda_i}\ssm B \qand \big(\mu(v)\notin \Base(\D)\qor \mu(v)\in C(v)\big)\,.
\]
Now define $A=A_1\cup A_2 \subseteq [q]$ where 
\begin{align*}
A_1&=\{b' \st b' \text{ is the label of an extremely bad even vertex in } T_{(b,B)}\}\\
A_2&=\{\mu(v)\st v \text{ is an extremely bad leaf in } T_{(b,B)}\}. 
\end{align*}

We claim that the set $A$ contradicts  \eqref{e:A}, which will finish our argument. 
Indeed, by construction, we have $A\cap B=\emptyset$. Also, note that $b\in A_1 \subseteq A$, since $b$ is the label of the root, which is bad and hence extremely bad. 

It remains to argue that $A\in Q(\D)$. Assume $b'\in A \cap \Base(\D)$. We need to show that for every $\lambda_i \in \nu(b')$,  $B_{\lambda_i}\cap A\ne\emptyset$.

If $b'\in A_2\ssm A_1$,  then  $b'=\mu(v)$ for an extremely bad leaf $v$ of $T_{(b,B)}$. By definition of $\mu(v)$, and since we know $b' \in \Base(\D)$, we must have  $b'=\mu(v)\in C(v)$. Since $v$ is extremely bad, all its ancestors are extremely bad, so 
 then $b'=\mu(v)\in A_1$, a contradiction. 

Therefore  $b'\in A_1$, which means that  
$b'$ is the label of an extremely bad even vertex $v$. Since $v$ is bad, all its children are also bad. Thus, for an element $\lambda_i \in \nu(b')$, the vertex $v$ has a bad child $v'$ with label $\lambda_i$. Note that $v'$ is extremely bad, since it is bad and its parent $v$ is extremely bad.  
If $v'$ is a leaf, then $\mu(v')\in A\cap B_{\lambda_i}$. If $v'$ is not a leaf, then it has at least one bad child $v''$ with label $b''$, where $b''\in B_{\lambda_i}\ssm B$. The ancestors of $v''$ are $v'$, $v$ and all the ancestors of $v$, and hence $v''$ is also extremely bad. Then $b''\in A\cap B_{\lambda_i}$. We have shown that $A\cap B_{\lambda_i} \neq \emptyset$, which ends our argument.
\end{proof}

As a result of the previous theorem, we now have a criterion showing when a set of divisibility relations 
describes  {\it all} possible  relations on {\it some} ideal.

\begin{corollary}
\label{c:whenD}
For $q\ge 1$, a set $\D$ as in \eqref{eq:D-def} is equal to $\R(I)$ for some ideal $I$ minimally generated by $q$ monomials if and only if $\overline{\D}=\D$. 
\end{corollary}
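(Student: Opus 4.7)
The plan is to recognize this corollary as an immediate consequence of the two main results already established, namely \cref{t:power1-rels} together with \cref{p:closure}~(6), with essentially no new argument required. The statement amounts to characterizing exactly those sets $\D$ of divisibility relations that arise as $\R(I)$: they are precisely the sets that are already closed under the deduction operation $\overline{(\cdot)}$.

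For the forward direction, I would suppose $\D = \R(I)$ for some ideal $I$ minimally generated by $q$ monomials (with a fixed ordering on its minimal generating set $U$). Then applying \cref{p:closure}~(6), which asserts $\overline{\R(U)} = \R(U)$, yields $\overline{\D} = \overline{\R(I)} = \R(I) = \D$.

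For the backward direction, I would assume $\overline{\D} = \D$ and produce an explicit witness, namely $I = \ED$. First, I would use \cref{p:mingens-simple}~(1) to ensure that $\ED$ is minimally generated by the $q$ monomials $\ed{1}, \dots, \ed{q}$; this is where the standing hypothesis $|B_i| \geq 2$ from \eqref{eq:D-def} is needed. Next I would invoke \cref{t:power1-rels} to obtain $\R(\ED) = \overline{\D}$, and combine with the hypothesis $\overline{\D} = \D$ to conclude $\R(\ED) = \D$, completing the verification.

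There is no real obstacle to overcome; the substantive work has been carried out in the proof of \cref{t:power1-rels} via the decision-tree machinery of \cref{c:decision} and \cref{l:pull-apart}, and in the closure properties of \cref{p:closure}. The corollary merely packages these into a clean characterization: a set $\D$ of the form \eqref{eq:D-def} represents the complete divisibility behavior of some $q$-generated square-free monomial ideal exactly when it is already closed under extensions, compositions via $\circ$, and inclusion of trivial relations.
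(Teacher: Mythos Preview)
Your proposal is correct and follows essentially the same approach as the paper: the forward direction via \cref{p:closure}~(6) and the backward direction via $I=\ED$ together with \cref{t:power1-rels}. Your explicit mention of \cref{p:mingens-simple}~(1) to verify that $\ED$ is indeed minimally generated by $q$ monomials is a helpful detail that the paper leaves implicit.
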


\begin{proof}
One implication follows directly from \cref{t:power1-rels} using $I = \ED$. The other implication follows from \cref{p:closure}~(6).
\end{proof}

We conclude this section with a discussion  of when two sets of divisibility relations define the same extremal ideals. Based on \cref{d:D-def}, the key to determining the $\D$-extremal ideals are the sets $Q(\D)$. 
Recall from \cref{d:minimal} that the minimal relations of $\D$ are those that are not trivial and which do not properly extend another relation of $\D$. We now show that the $\D$-extremal ideal is unchanged by the closure operations studied so far.

\begin{lemma}
\label{E-equal}
If  $\D$ as in \eqref{eq:D-def} is a set of divisibility relations, then 
\[
\ED=\E_{{\D^\circ}}=\E_{\overline \D}=\E_{\min(\D)}=\E_{\min(\overline{\D})} = \E_{\min(\D^\circ)}\,.
\]
\end{lemma}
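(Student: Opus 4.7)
The plan is to reduce each claimed equality to an equality of sets of the form $Q(-)$. By \cref{d:D-def} the ideal $\E_{\D'}$ is generated by the monomials $\prod_{A\in Q(\D'),\,i\in A}y_A$ for $i\in[q]$, and since the $y_A$ are distinct variables this generating set, and hence the ideal, is completely determined by $Q(\D')$. Thus $\E_{\D'}=\E_{\D''}$ iff $Q(\D')=Q(\D'')$, and it suffices to show that all six sets $Q(\D)$, $Q(\D^\circ)$, $Q(\overline{\D})$, $Q(\min(\D))$, $Q(\min(\overline{\D}))$, $Q(\min(\D^\circ))$ coincide.

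The technical crux will be a single principle: if $\D'$ satisfies the hypothesis of \eqref{eq:D-def} and $\D'\subseteq\D''\subseteq\overline{\D'}$, then $Q(\D')=Q(\D'')$. The inclusion $Q(\D')\supseteq Q(\D'')$ is immediate, since more constraints give fewer admissible sets. For the reverse, I would take $A\in Q(\D')$ and $(b,B)\in\D''$ with $b\in A$, and argue $A\cap B\ne\emptyset$. If $(b,B)$ is trivial then $b\in B$ and we are done. Otherwise $(b,B)$ is a non-trivial element of $\overline{\D'}$, and \cref{t:power1-rels} applied to $\D'$ yields $(b,B)\in\R(\E_{\D'})$; the argument in the proof of \cref{p:mingens-simple}(2) then gives $A\cap B\ne\emptyset$ verbatim.

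Given this principle, most equalities follow at once. The inclusions $\D\subseteq\D^\circ\subseteq\overline{\D}$ from \cref{p:closure}(1) yield $Q(\D)=Q(\D^\circ)=Q(\overline{\D})$. For the $\min$ variants, I would verify the sandwich $\min(\D)\subseteq\D\subseteq\overline{\min(\D)}$, and its analogues with $\overline{\D}$ or $\D^\circ$ in place of $\D$. The right inclusion holds because every trivial element of $\D$ lies in $\Lambda^\triv\subseteq\overline{\min(\D)}$, while every non-trivial element of $\D$, by descending in the finite set $\D$ along the partial order $\leqslant$, must extend some element of $\min(\D)$ and hence lie in $(\min(\D))^\ex\subseteq\overline{\min(\D)}$ by \cref{p:closure}(3).

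The main point to watch is that every $\D'$ we feed into the principle must satisfy the hypothesis of \eqref{eq:D-def}, i.e., $|B|\ge 2$ for each of its elements, so that \cref{t:power1-rels} applies. This holds for $\D$ by assumption; for $\min(\D)$ because its elements are non-trivial members of $\D$ and so inherit $|B|\ge 2$; for $\min(\D^\circ)$ because every composition $(b,B)\circ(c,C)=(b,(B\ssm\{c\})\cup C)$ has second component containing $C$ of size at least $2$; and for $\min(\overline{\D})$ because the non-trivial part of $\overline{\D}$ lies in $(\D^\circ)^\ex$, where extensions only enlarge the second component. Once this is confirmed, the full chain of equalities follows by repeated application of the principle.
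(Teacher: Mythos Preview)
Your proof is correct and shares the paper's core reduction: equality of $\D$-extremal ideals amounts to equality of the sets $Q(-)$, and the inclusion $Q(\D)\subseteq Q(\overline{\D})$ is obtained via \cref{t:power1-rels} together with the argument of \cref{p:mingens-simple}(2). The organization differs slightly. You package everything into a single sandwich principle $\D'\subseteq\D''\subseteq\overline{\D'}\Rightarrow Q(\D')=Q(\D'')$ and then apply it with $\D'=\min(\D)$, $\min(\D^\circ)$, $\min(\overline{\D})$, which forces you to verify that each of these satisfies the $|B|\ge 2$ hypothesis so that \cref{t:power1-rels} applies. The paper instead handles the $\min$ equalities by a direct elementary argument: for $A\in Q(\min(\D))$ and nontrivial $(b,B)\in\D$ with $b\in A$, simply descend to some $(b,C)\in\min(\D)$ with $C\subseteq B$ and use $A\cap C\ne\emptyset$. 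This avoids invoking \cref{t:power1-rels} a second time and sidesteps the $|B|\ge 2$ check for the $\min$ sets, but your unified formulation has the virtue of making all six equalities instances of one lemma.
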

\begin{proof} 
First note that if $\D_1$ and $\D_2$ are two sets of relations, then
\begin{equation}\label{e:D1D2}
\D_1\subseteq \D_2 \Longrightarrow Q(\D_2)\subseteq Q(\D_1), \qand
\E_{\D_1}=\E_{\D_2} \iff Q(\D_1)=Q(\D_2).
\end{equation}
By \cref{p:mingens-simple}~(2) we have $Q(\D)=Q(\D\cup \R(\ED))$, hence   \cref{t:power1-rels} gives $Q(\D)=Q(\overline{\D})$. 
Then \eqref{e:D1D2} implies $Q(\D)=Q(\D^\circ)$ and $\ED=\E_{{\D^\circ}}=\E_{\overline \D}$. 

We now show $\ED=\E_{\min(\D)}$. It suffices to show $Q(\D)=Q(\min(\D))$. In view of \eqref{e:D1D2}, we need to show the inclusion $Q(\min(\D))\subseteq Q(\D)$. Let $A\in Q(\min(\D))$. We need to show that, if $(b,B)\in \D$ is nontrivial and $b\in A$ then $A\cap B\ne\emptyset$. We know there exists $(b,C)\in \min(\D)$ such that $C\subseteq B$. Then, since $A\in Q(\min(\D))$ and $b\in A$, we have $A\cap C\ne\emptyset$, and hence $A\cap B\ne\emptyset$. This shows thus $\ED=\E_{\min(\D)}$.
In particular, we also have $\E_{\overline\D}=\E_{\min(\overline\D)}$ and $\E_{\D^\circ}=\E_{\min(\D^\circ)}$.
\end{proof}

We conclude this section with our second main result, which provides equivalent ways of determining when two sets of divisibility relations define the same extremal ideal.

\begin{theorem}
\label{t:when-equal}
For a positive integer $q$ and $\D_1$ and $\D_2$ as in \eqref{eq:D-def}, the following statements are equivalent: 
\begin{enumerate}
\item $\E_{\D_1}=\E_{\D_2}$;
\item $\R(\E_{\D_1})=\R(\E_{\D_2})$;  
\item$\overline{\D_1}=\overline{\D_2}$;
\item $\min(\overline{\D_1})=\min(\overline{\D_2})$.
\end{enumerate}
\end{theorem}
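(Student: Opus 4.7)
The plan is to close the cycle $(1) \Rightarrow (2) \Rightarrow (3) \Rightarrow (1)$ using the previously established machinery, and then handle $(3) \Leftrightarrow (4)$ separately. The implication $(1) \Rightarrow (2)$ is essentially immediate: by \eqref{e:D1D2} the formula defining $\ed{i}$ depends on $\D$ only through $Q(\D)$, so $\E_{\D_1}=\E_{\D_2}$ forces the ordered minimal generating tuples to agree index by index, and hence the associated divisibility relation sets agree. Next, $(2) \Rightarrow (3)$ follows by applying \cref{t:power1-rels} to both ideals, which yields $\overline{\D_i}=\R(\E_{\D_i})$ for $i=1,2$. Finally, $(3) \Rightarrow (1)$ is a direct application of \cref{E-equal}, which gives $\E_{\D_i}=\E_{\overline{\D_i}}$, so equal closures force equal extremal ideals.

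The forward direction $(3) \Rightarrow (4)$ is immediate, since $\min$ is a function of the underlying set. The substance is in the reverse $(4) \Rightarrow (3)$, where I plan to establish the recovery formula
$$\overline{\D}=\bigl(\min(\overline{\D})\bigr)^{\ex}\cup [q]^{\triv}.$$
The inclusion $\supseteq$ is immediate from \cref{p:closure}(3) (closure of $\overline{\D}$ under extensions) together with the containment $[q]^{\triv}\subseteq \overline{\D}$ coming from the definition of the closure. For $\subseteq$, let $(b,B)\in \overline{\D}$; if it is trivial it lies in $[q]^{\triv}$, and otherwise, among the elements $(b,C)\in \overline{\D}$ with $C\subseteq B$, I choose one with $|C|$ minimal. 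Such a $(b,C)$ must itself be non-trivial, for if $b\in C$ then $b\in B$ would force $(b,B)$ to be trivial, contrary to assumption; the minimality of $|C|$ combined with non-triviality makes $(b,C)$ minimal in $\overline{\D}$ with respect to $\leqslant$, so $(b,B)\in (b,C)^{\ex}\subseteq \bigl(\min(\overline{\D})\bigr)^{\ex}$. Applying the recovery formula to $\D_1$ and $\D_2$ then deduces $\overline{\D_1}=\overline{\D_2}$ from $\min(\overline{\D_1})=\min(\overline{\D_2})$.

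The only subtle point in the argument is preserving non-triviality during the descent in $(4) \Rightarrow (3)$; apart from that small verification, the theorem is a streamlined assembly of \cref{t:power1-rels}, \cref{E-equal}, and the closure-under-extensions property from \cref{p:closure}(3).
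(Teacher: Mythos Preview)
Your proof is correct, and the ingredients you use --- \cref{t:power1-rels}, \cref{E-equal}, and closure under extensions --- match those in the paper. The organization, however, differs slightly. You close the cycle $(1)\Rightarrow(2)\Rightarrow(3)\Rightarrow(1)$ and then prove $(3)\Leftrightarrow(4)$ separately, which forces you to establish the recovery formula $\overline{\D}=\bigl(\min(\overline{\D})\bigr)^{\ex}\cup [q]^{\triv}$ for the direction $(4)\Rightarrow(3)$. The paper instead closes the single cycle $(1)\Rightarrow(2)\Rightarrow(3)\Rightarrow(4)\Rightarrow(1)$, invoking the identity $\ED=\E_{\min(\overline{\D})}$ from \cref{E-equal} to get $(4)\Rightarrow(1)$ in one line. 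This bypasses your recovery formula entirely: once you know $\min(\overline{\D_1})=\min(\overline{\D_2})$, \cref{E-equal} immediately gives $\E_{\D_1}=\E_{\min(\overline{\D_1})}=\E_{\min(\overline{\D_2})}=\E_{\D_2}$. Your argument is self-contained and makes the structure of $\overline{\D}$ more explicit; the paper's is shorter because it lets \cref{E-equal} absorb that work.
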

\begin{proof}
 The implications (1)$\implies$(2) and (3)$\implies$(4) are clear. The implication (2)$\implies$(3) follows from \cref{t:power1-rels}. Finally, the implication (4)$\implies$(1) follows from  \cref{E-equal}. 
\end{proof}

\section{$\E_\D$ is extremal}

We now show why the ideals $\ED$ are extremal relative to the divisibility relations in $\D$ via a family of maps. For each square-free monomial ideal $I$ whose generators satisfy the relations in $\D$, we show that the map $\psi_I$ defined in \cite[Definition 7.5]{Lr} sends the generating set of $\ED$ onto a generating set of $I$, which will allow for comparisons of the algebraic behaviors of powers of $\ED$ and powers of $I$. In particular, we show that the minimal number of generators of  the powers of the ideals $\ED$ is maximum possible  (\cref{p:minimal}) and, more generally, the betti numbers of their powers bound the betti numbers of the powers of any square-free monomial ideal satisfying the relations in $\D$ (\cref{t:EDextremal}). Moreover, the minimal free resolution of any monomial ideal can be modeled using an ideal $\ED$ for an appropriately chosen set $\D$ (\cref{t:lattice}).

We now determine minimal generating sets for the powers $\EDr$ of $\ED$.  This result extends \cref{p:mingens-simple}, which deals with the case $r=1$. In what follows, we define 
$$
\Nrq = \{(a_1,\ldots,a_q) \in \NN^q \st a_1+\cdots + a_q=r\}
$$
and for $\ba=(a_1, \dots, a_q)\in \Nrq$, we set 
$$\ped^\ba=\ed{1}^{a_1}\cdots \ed{q}^{a_q}.$$

\begin{proposition}[{\bf A minimal generating set for $\EDr$}]
\label{p:minimal}
    Given $q,r>0$, let $\D$ be as in \eqref{eq:D-def}. 
   For $\ba,\bc\in \Nrq$, if $\ped^{\ba}\mid \ped^{\bc}$, then $\ba=\bc$. In particular, the monomials  $\ped^{\ba}$ with $\ba\in \Nrq$ form a minimal generating set for  $\EDr$. 
\end{proposition}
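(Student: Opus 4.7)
The plan is to extend the idea used in the proof of \cref{p:mingens-simple}(1) by employing the complement sets $A_i := [q]\setminus\{i\}$ as test variables. Suppose $\ped^{\ba}\mid\ped^{\bc}$ for some $\ba,\bc\in\Nrq$. If I can show that $A_i\in Q(\D)$ for every $i\in[q]$, then the exponent of $y_{A_i}$ in $\ped^{\ba}$ equals $\sum_{j\ne i}a_j=r-a_i$, and the corresponding exponent in $\ped^{\bc}$ equals $r-c_i$; divisibility then yields $r-a_i\le r-c_i$, i.e., $a_i\ge c_i$. Because this holds for every $i$ while $\sum_i a_i=r=\sum_i c_i$, the nonnegative differences $a_i-c_i$ must all vanish, and hence $\ba=\bc$.

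The crux is therefore the claim that $A_i\in Q(\D)$, and this is precisely where the standing hypothesis $|B_{\lambda_u}|\ge 2$ from \eqref{eq:D-def} is used. Fix $i\in[q]$ and any relation $(b_{\lambda_u},B_{\lambda_u})\in\D$. If $b_{\lambda_u}=i$, then $b_{\lambda_u}\notin A_i$ and the defining condition of $Q(\D)$ is satisfied vacuously. Otherwise $b_{\lambda_u}\in A_i$, and I need $A_i\cap B_{\lambda_u}\ne\emptyset$, equivalently $B_{\lambda_u}\not\subseteq\{i\}$; this follows at once from $|B_{\lambda_u}|\ge 2$. Without the size assumption the set $[q]\setminus\{i\}$ could fail to lie in $Q(\D)$ and the argument would break down, so this hypothesis is essential.

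The second assertion is then immediate: the monomials $\ped^{\ba}$ with $\ba\in\Nrq$ are exactly the products of $r$ (with repetition) minimal generators of $\ED$, and hence they generate $\EDr$; by the first statement no one of them properly divides another, so this is a minimal generating set. The main (quite modest) obstacle is just recognizing that $[q]\setminus\{i\}$ is the right test set---no sophisticated bookkeeping such as the decision trees of \cref{c:decision} is needed, because this single complement already isolates $a_i$ given the fixed total degree $r$.
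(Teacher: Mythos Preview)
Your argument is correct and, in fact, cleaner than the paper's. The paper does not use the complements $[q]\setminus\{i\}$; instead it first observes that $\{i\}\in Q(\D)$ for $i\notin\Base(\D)$ to get $a_i\le c_i$ on those indices, and then for $j\in\Base(\D)$ it builds auxiliary sets
\[
A_j=\Base(\D)\cup\bigcup_{u\in[d]}(B_u\setminus\{j\})\quad\text{and}\quad A'_j=A_j\setminus\{j\},
\]
checks $A_j,A'_j\in Q(\D)$, and combines the resulting inequalities to squeeze out $a_j=c_j$. Your single family $A_i=[q]\setminus\{i\}$ handles all indices uniformly, and the verification that $A_i\in Q(\D)$ via $|B_u|\ge 2$ is exactly the same mechanism the paper invokes for its $A_j,A'_j$. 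The trade-off is minimal: the paper's sets have the mild advantage of living inside the ``support'' of $\D$, but your approach avoids the case split on $\Base(\D)$ entirely and makes the role of the fixed degree $r$ transparent. (Your aside about decision trees is a bit misplaced---the paper's proof of this proposition does not use them either; they appear only in the proof of \cref{t:power1-rels}.)
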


\begin{proof}
Let $\D = \{(b_u, B_u) \mid u \in [d]\}$ and assume $\ped^{\ba}\mid \ped^{\bc}$. For each $A\in Q(\D)$, let  $a_A$ and $c_A$ denote the power of the  variable $y_A$ in $\ped^{\ba}$ and $\ped^{\bc}$, respectively. So for 
$A \in Q(\D)$ we have $a_A \leq c_A$ where
$$
a_A = \sum_{i \in A}a_i \qand 
c_A=\sum_{i \in A} c_i .
$$
Since  for $A \in Q(\D)$ we have $a_A \leq c_A$, and also $\displaystyle \sum_{i\in [q]}a_i=r=\sum_{i\in [q]}c_i$, it immediately follows that 
\begin{equation}\label{eq:innotin}
\sum_{i \in [q]\ssm A}a_i \geq  
\sum_{i \in [q]\ssm A} c_i
\qforall
A \in Q(\D).
\end{equation}
If $A=\{i\}$ for some $i\in [q]\ssm \Base(\D)$, observe that $A\in Q(\D)$. Since $a_A\leq c_A$ we have thus
\begin{equation}
\label{eq:inotinB}
a_i\le c_i 
\qforall i\in [q]\ssm \Base(\D).
\end{equation}

For each  $j \in [q]$ we define 
$$A_j= \Base(\D)\cup \bigcup_{u\in [d]} (B_u\ssm\{j\}) \qand A'_j=A_j\ssm\{j\}.$$ 

Let $j\in [q]$. For each $u \in[d]$, the hypothesis $|B_u|\ge 2$ in \eqref{eq:D-def} implies  $B_u\cap A_j\ne\emptyset$ and $B_u\cap A'_j\ne\emptyset$, and hence $A_j, A'_j\in Q(\D)$. Since $\Base(\D)\subseteq A_j$, and hence $$[q]\ssm A_j\subseteq [q]\ssm \Base(\D),$$ we combine \eqref{eq:innotin} with $A=A_j$ and \eqref{eq:inotinB} to  obtain 
\begin{equation}
\label{eq:combined}
a_i=c_i\qforall i\in [q]\ssm A_j.
\end{equation}

If $j\notin \Base(\D)$, then $j\notin A_j$. Applying \eqref{eq:combined} with $i=j$ yields $a_j=c_j$. Thus 
\begin{equation}
\label{eq:jnotinB}
a_j=c_j \qforall j\in [q]\ssm \Base(\D).\\
\end{equation}

If $j\in \Base(\D)$, we apply \eqref{eq:innotin} with $A=A'_j$, noting that $[q]\ssm A'_j=([q]\ssm A_j)\cup \{j\}$, to get 
\begin{equation*}
\label{eq:A'}
a_j+\sum_{i\in [q]\ssm A_j}a_i\ge c_j+\sum_{i\in [q]\ssm A_j}c_i\,.
\end{equation*}
In this equation, we cancel the equal terms given by \eqref{eq:combined} to obtain $a_j\ge c_j$. We have shown thus
\begin{equation}
\label{eq:jinB}
a_j \geq c_j  \qforall j\in \Base(\D).
\end{equation}

Now recalling that  $\displaystyle\sum_{i\in [q]}a_i=r=\sum_{i\in [q]}c_i$, and using \eqref{eq:jnotinB}  we can write  
$$\sum_{i \in \Base(\D)} a_i = \sum_{i \in  \Base(\D)} c_i,$$
which along with \eqref{eq:jinB} forces
\begin{equation}\label{eq:jinBeq}
a_j = c_j 
\qforall j\in \Base(\D).
\end{equation}
From \eqref{eq:jnotinB} and \eqref{eq:jinBeq} it follows that $\ba=\bc$, and we are done.
\end{proof}

Now that we have a minimal generating set for $\EDr$ for all $r$, we introduce a family of ring homomorphisms that we will use in showing that $\EDr$ is extremal. Using the setting as in \cref{d:D-def}, fix $\D=\{(b_j,B_j) : j \in [d]\}$ and 
let $I$ be any ideal generated by square-free monomials $m_1, \dots, m_q$ in $R = \sfk[x_1, \ldots, x_n]$ satisfying the divisibility relations given by $\D$. 
Working with the variables in the ring $R$ and the generators of $I$, for each $k\in [n]$, set 
\[
\mathcal{A}_k= \{j \in [q] \st x_k \mid m_j \}.
\]
Since $m_{b_j}\mid \lcm(m_i \st i\in B_j)$, we have $x_k\mid m_{b_j}\implies x_k\mid m_b$ for some $b\in B_j$ and thus 
\begin{equation}
\label{Ak}
b_j\in \mathcal{A}_k
\implies 
B_j\cap \mathcal{A}_k\ne \emptyset
\implies
\mathcal{A}_k\in Q(\D) \qforall k\in [n]. 
\end{equation}

With the polynomial ring $S_{[q]}=\sfk[y_A\st \emptyset\ne A \subseteq [q]]$ as before, recall from \cite[Definition 7.5]{Lr} that the ring homomorphism $\psi_I: S_{[q]} \rightarrow R$ is defined
by 
 \[
\psi_I(y_A)=\begin{cases}
  \displaystyle\prod_{\substack{k\in[n] \\ A=\mathcal{A}_k}} x_k, & \mbox{if } A=\mathcal{A}_k \mbox{ for some }
   k \in [n],\\ 1, & \mbox{otherwise}.
  \end{cases}
\]

We next collect some properties of $\psi_I$ that will be used to show that the betti numbers of $\EDr$ are maximal among those of powers of square-free monomial ideals satisfying the relations of $\D$. Note that this proposition extends \cite[Lemma 7.7]{Lr}.

\begin{proposition}\label{p:old-new}
Let $I$ be an ideal of the polynomial ring $R$.  If $I$ is generated by square-free monomials $m_1,\ldots,m_q$ satisfying the divisibility relations given by $\D$ as in \eqref{e:oneOK}, then
   \begin{enumerate}
   \item $\psi_I(\ed{i})=\psi_I(\e_i)=m_i$\, for $i \in [q]$;
   \item $\psi_I(\ped^\ba)=\bma$ for each $\ba\in \Nrq$, where $\bma = m_1^{a_1} \cdots m_q^{a_q}$;
   \item $\psi_I(\EDr)R  =I^r$ for every $r>0$;
   \item $\psi_I$ preserves least
     common multiples, that is: 
$$
   \psi_I(\lcm(\ped^{\ba_1}, \dots, \ped^{\ba_t}))=\lcm(\bm^{\ba_1}, \dots, \bm^{\ba_t}) \qforall \ba_1,\ldots,\ba_t \in \Nrq, \quad t\ge 1.
$$     
   \end{enumerate}

\end{proposition}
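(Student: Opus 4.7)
The plan is to prove the four parts in order: (1) by direct calculation using the definition of $\psi_I$, (2) and (3) as quick consequences of (1) and \cref{p:minimal}, and (4) by a careful exponent-tracking argument.

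For part (1), I would expand
\[
\psi_I(\ed{i}) = \prod_{\substack{A \in Q(\D) \\ i \in A}} \psi_I(y_A).
\]
By the definition of $\psi_I$, only those $A$ of the form $\mathcal{A}_k$ contribute nontrivially, and by \eqref{Ak} every such $\mathcal{A}_k$ automatically belongs to $Q(\D)$. Reindexing the surviving factors by $k$ (each $k \in [n]$ arises from exactly one $A = \mathcal{A}_k$), I obtain
\[
\psi_I(\ed{i}) = \prod_{\substack{k \in [n] \\ i \in \mathcal{A}_k}} x_k = \prod_{k : x_k \mid m_i} x_k = m_i,
\]
where the final equality uses that $m_i$ is square-free. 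The identical computation, with $Q(\D)$ replaced by the collection of all nonempty subsets of $[q]$, handles $\psi_I(\e_i) = m_i$. Part (2) is then immediate from multiplicativity of $\psi_I$, and part (3) follows by combining (2) with \cref{p:minimal}: the generating set $\{\ped^\ba \st \ba \in \Nrq\}$ of $\EDr$ maps under $\psi_I$ to the standard generating set $\{\bma \st \ba \in \Nrq\}$ of $I^r$.

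The content of the proposition lies in part (4), and my approach is to compare the exponent of each $x_k$ on the two sides of the claimed identity. The key structural observation is a disjointness property of $\psi_I$: the variable $x_k$ appears in $\psi_I(y_A)$ if and only if $A = \mathcal{A}_k$. Writing $\ped^{\ba_s} = \prod_{A \in Q(\D)} y_A^{c_{A,s}}$, where the definition of $\ed{i}$ gives $c_{A,s} = \sum_{i \in A} a_{i,s}$, one sees that the exponent of $x_k$ in $\psi_I(\lcm(\ped^{\ba_1}, \ldots, \ped^{\ba_t}))$ equals $\max_{1 \le s \le t} c_{\mathcal{A}_k, s}$. On the other side, the exponent of $x_k$ in $\bm^{\ba_s}$ is $\sum_{i : x_k \mid m_i} a_{i,s} = \sum_{i \in \mathcal{A}_k} a_{i,s} = c_{\mathcal{A}_k, s}$, so the exponent of $x_k$ in $\lcm(\bm^{\ba_1}, \ldots, \bm^{\ba_t})$ is again $\max_{1 \le s \le t} c_{\mathcal{A}_k, s}$, establishing the identity. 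The main obstacle is navigating the two layers of indexing (subsets of $[q]$ versus variables of $R$) and verifying that $y_A$ with $A$ outside $\{\mathcal{A}_k \st k \in [n]\}$ contribute nothing on either side, which is immediate since $\psi_I$ sends such $y_A$ to $1$ and no $x_k$ is indexed by such an $A$.
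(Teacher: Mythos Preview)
Your proof is correct and rests on the same key observation as the paper's, namely that \eqref{Ak} forces $\psi_I(y_A)=1$ whenever $A\notin Q(\D)$. The organizational difference is that the paper exploits this to reduce everything to the $\D=\emptyset$ case: for (1) it shows $\psi_I(\ed{i})=\psi_I(\e_i)$ and then cites \cite[Lemma~7.7]{Lr} for $\psi_I(\e_i)=m_i$, and for (4) it similarly shows $\psi_I(\lcm(\ped^{\ba_1},\dots,\ped^{\ba_t}))=\psi_I(\lcm(\pme^{\ba_1},\dots,\pme^{\ba_t}))$ and then invokes \cite[Lemma~7.7(iii)]{Lr}. You instead carry out the full exponent-tracking directly, which makes your argument self-contained but slightly longer. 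One minor point: your appeal to \cref{p:minimal} in part (3) is unnecessary---you only need that the $\ped^\ba$ generate $\EDr$, not that they do so minimally, and the paper accordingly deduces (3) from (2) alone.
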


\begin{proof}
    (1) We know $\psi_I(\e_i) =m_i$ from \cite[Lemma 7.7]{Lr}. To show the remaining equality,  note that \eqref{Ak} implies that for $A\notin Q(\D)$, we have $A\ne \mathcal A_k$ for all $k$, and thus $\psi_I(y_A)=1$. Thus we have
    \[
  \psi_I(\ed{i})=\psi_I\big({\prod_{\substack{A\in Q(\D)\\ i\in A}}y_A}\big)=\prod_{\substack{A\in Q(\D)\\ i\in A}}\psi_I(y_A)=\prod_{\substack{A\subseteq [q]\\ i\in A}}\psi_I(y_A)=\psi_I\big(\prod_{\substack{A\subseteq [q]\\ i\in A}}y_A\big)=\psi_I(\e_i)\,.
  \]

Statement (2) follows directly from (1), since $\psi_I$ is a ring homomorphism, and (3) is a direct consequence of (2).

(4) Set ${\ba_i}=(a_{i1}, a_{i2}, \dots, a_{iq})$, for $i\in [t]$. Note that 
\[
\lcm(\ped^{\ba_1}, \dots, \ped^{\ba_t})=\prod_{A\in Q(\D)} (y_A)^{\underset{1\le i\le t}{\max}  \sum_{j\in A} a_{ij}}\quad\text{and}\quad \lcm(\pme^{\ba_1}, \dots, \pme^{\ba_t})=\prod_{\emptyset\ne A\subseteq [q]} (y_A)^{\underset{1\le i\le t}{\max}  \sum_{j\in A}  a_{ij}}.
\]
Now we proceed as in (1), using the observation that $\psi_I(A)=1$ when $A\notin Q(\D)$ and the fact that $\psi_I$ is a ring homomorphism, to conclude 
\[
\psi_I(\lcm(\ped^{\ba_1}, \dots, \ped^{\ba_t}))=\psi_I(\lcm(\pme^{\ba_1}, \dots, \pme^{\ba_t}))=\lcm(\bm^{\ba_1}, \dots, \bm^{\ba_t}),
\]
where the last equality was proved in \cite[Lemma 7.7(iii)]{Lr}. 
\end{proof}

\begin{corollary}
If $I$ is an ideal generated by $q\ge 1$ square-free monomials satisfying the divisibility relations given by $\D$ as in \eqref{eq:D-def}, then 
\[
\R(\EDr)\subseteq \R(I^r)\qforall r\ge 1. 
\]
\end{corollary}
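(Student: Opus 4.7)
The plan is to apply the ring homomorphism $\psi_I\colon S_{[q]}\to R$ and invoke \cref{p:old-new}, which captures exactly the fact that $\psi_I$ carries divisibilities among the monomials $\ped^\ba$ to divisibilities among the monomials $\bm^\ba$. By \cref{p:minimal}, the minimal generating set of $\EDr$ is $\{\ped^\ba : \ba \in \Nrq\}$, so relations in $\R(\EDr)$ are encoded by pairs $(b,B)$ with $b\in\Nrq$ and $B\subseteq \Nrq$; meanwhile, by \cref{p:old-new}(3), the set $\{\bm^\ba : \ba \in \Nrq\}$ is a generating set for $I^r$.

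First I would dispose of trivial relations, which automatically lie in $\R(I^r)$. For a non-trivial $(b,B)\in \R(\EDr)$, the definition gives $\ped^b \mid \lcm(\ped^\ba : \ba \in B)$ in $S_{[q]}$. Applying $\psi_I$, which preserves divisibility as a ring homomorphism, together with \cref{p:old-new}(2) and (4) (which give $\psi_I(\ped^\ba)=\bm^\ba$ and that $\psi_I$ commutes with the lcm operation on these monomials), one obtains the divisibility $\bm^b \mid \lcm(\bm^\ba : \ba \in B)$ in $R$. This is the divisibility that witnesses the desired relation in $\R(I^r)$ once translated to the minimal generating set of $I^r$ via the natural assignment $\ba \mapsto \bm^\ba$.

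The hard part will be this final translation: the assignment $\ba \mapsto \bm^\ba$ need not be injective, and the monomials $\bm^\ba$ need not all be minimal generators of $I^r$, whereas $\R(I^r)$ is defined in \cref{d: div-rel} in terms of the minimal generating set of $I^r$. Duplicates may collapse a non-trivial relation into a trivial one after identification, and handling non-minimal images will require some additional bookkeeping in choosing an indexing for the minimal generating set of $I^r$; nevertheless, the essential mathematical content is contained in the central application of \cref{p:old-new}.
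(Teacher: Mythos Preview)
Your approach is exactly what the paper has in mind: the corollary is stated without proof immediately after \cref{p:old-new}, so the intended argument is precisely the one-line application of $\psi_I$ together with parts (2) and (4) of that proposition, which you carry out correctly.

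The concern you raise in your final paragraph is legitimate and is something the paper itself glosses over. As written, $\R(I^r)$ is defined via the \emph{minimal} monomial generating set of $I^r$ (\cref{d: div-rel}), which need not be $\{\bm^\ba:\ba\in\Nrq\}$, and the monomials $\bm^\ba$ need not even be distinct; thus the literal set-inclusion $\R(\EDr)\subseteq\R(I^r)$ only makes sense once both sides are indexed by $\Nrq$. The paper is implicitly reading $\R(I^r)$ as the set of divisibility relations among the (possibly redundant) generators $\bm^\ba$, indexed by $\Nrq$. Under that reading your argument is complete, and no further bookkeeping is required: the divisibility $\bm^b\mid\lcm(\bm^\ba:\ba\in B)$ you obtain is exactly the relation $(b,B)$ on that indexed set. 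Your instinct that the ``essential mathematical content is contained in the central application of \cref{p:old-new}'' is correct; what remains is a matter of aligning conventions rather than supplying a missing idea.
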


 We are now ready to explain why the ideals $\ED$ are referred to as being extremal. This language comes from the fact that the betti numbers of powers of $\ED$ provide upper bounds for the betti numbers of any square-free monomial ideal satisfying the relations $\D$. In view of \cref{p:old-new}, the proof of our next result is virtually identical to the one given for \cite[Theorem 7.9]{Lr}, which deals with the case when $\D=\emptyset$. 

 Recall that if $I$ is a monomial ideal with minimal generators $m_1, \dots, m_q$, its {\bf lcm lattice}, denoted $\LCM(I)$, is the lattice with elements labeled by the least common multiples of the generators, ordered by divisibility.  The atoms of this lattice are the monomials $m_1, \dots, m_q$ and the lcm of elements in the lattice is their join, i.e.~their least common upper bound in the lattice.

\begin{theorem}[{\bf $\ED$ is extremal}]
\label{t:EDextremal}
Let $I$ be a square-free monomial ideal whose minimal generators satisfy the divisibility relations given by $\D$ as in \eqref{eq:D-def}. Then for all integers $r \ge 1$ and $i \ge 0$ we have
\begin{enumerate}
  \item if a simplicial complex $\Delta$ supports a free resolution of
   ${\ED}^r$, then $\psi_I(\Delta)$ supports  a free resolution of $I^r$;
   
\item  if $\m \in \LCM(I^r)$, then 
$$\displaystyle \beta_{i, \m}(I^r) \le
  \sum_{\tiny \substack{\pme\in \LCM(\EDr)\\
  \psi_I(\pme)=\m}}
  \beta_{i,\pme}(\EDr);$$
\item $\beta_i(I^r)\leq \beta_i(\EDr)$.
  \end{enumerate}
\end{theorem}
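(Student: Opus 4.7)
The plan is to closely follow the strategy of \cite[Theorem 7.9]{Lr}, which handles the case $\D=\emptyset$, and substitute the generalized properties of $\psi_I$ from \cref{p:old-new} at each step where an analogous fact for the unrefined extremal ideal $\E_q$ was used. All three conclusions are essentially formal consequences of the fact that $\psi_I$ is a ring homomorphism that sends the minimal generators $\ped^{\ba}$ of $\EDr$ (described in \cref{p:minimal}) onto the generators $\bma$ of $I^r$, and that $\psi_I$ preserves the lcms of such monomials.

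For (1), I would use a Bayer--Peeva--Sturmfels style criterion: a labeled simplicial complex $\Delta$ supports a free resolution of a monomial ideal if and only if, for every monomial $\pme$, the subcomplex $\Delta_{\le \pme}$ of faces whose labels divide $\pme$ is $\sfk$-acyclic. Assume $\Delta$ supports a free resolution of $\EDr$, with vertices labeled by the $\ped^{\ba}$ and higher faces labeled by the lcms of their vertex labels. I would then define $\psi_I(\Delta)$ as the same underlying complex with each face label $\pme_{\sigma}$ replaced by $\psi_I(\pme_{\sigma})$; by \cref{p:old-new}(1) and (4) the vertex labels become the generators $\bma$ of $I^r$ and the face labels become their lcms, so $\psi_I(\Delta)$ is a genuine labeled complex on the generators of $I^r$. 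Given a monomial $\m$, I would then set
\[
\pme^{*} = \lcm\{\pme_{\sigma} \st \sigma\in\Delta,\ \psi_I(\pme_\sigma)\mid \m\}
\]
and verify the equality of subcomplexes $\psi_I(\Delta)_{\le \m} = \Delta_{\le \pme^*}$, using \cref{p:old-new}(4) once more. Acyclicity of the right-hand side then transfers to the left-hand side, so $\psi_I(\Delta)$ supports a resolution of $I^r$.

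For (2), I would invoke the Gasharov--Peeva--Welker description of multigraded Betti numbers as reduced homologies of open intervals in the lcm lattice. By \cref{p:old-new}(4), $\psi_I$ induces a surjective, join-preserving map of lattices $\LCM(\EDr)\twoheadrightarrow \LCM(I^r)$; a standard fibering argument over this map yields
\[
\beta_{i,\m}(I^r) \le \sum_{\pme\in\LCM(\EDr),\ \psi_I(\pme)=\m}\beta_{i,\pme}(\EDr),
\]
as the homology computing $\beta_{i,\m}(I^r)$ decomposes as a direct sum over the fiber. For (3), I would total the inequality in (2) over all $\m\in \LCM(I^r)$: since every $\pme\in\LCM(\EDr)$ maps under $\psi_I$ to some element of $\LCM(I^r)$, summing gives $\beta_i(I^r)\le \beta_i(\EDr)$.

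The main obstacle is checking carefully that the pushforward $\psi_I(\Delta)$ really behaves as a labeled complex supporting a resolution of $I^r$; in particular, that every subcomplex of the form $\psi_I(\Delta)_{\le \m}$ coincides with some $\Delta_{\le \pme^{*}}$. This hinges on \cref{p:old-new}(4), whose proof in turn uses the explicit minimal generating set from \cref{p:minimal}. Since the paper states the argument is virtually identical to \cite[Theorem 7.9]{Lr}, the remaining work is largely bookkeeping to verify that each step of the original proof remains valid when $\E_q$ and $\e_i$ are replaced by $\ED$ and $\ed{i}$, and when the relevant subsets of $[q]$ are restricted to $Q(\D)$.
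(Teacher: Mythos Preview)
Your proposal is correct and follows essentially the same strategy as the paper: use \cref{p:old-new}(3) and (4) to verify that $\psi_I$ maps $\EDr$ onto $I^r$ and preserves lcms, then invoke the existing argument for the unrefined extremal case verbatim. The only cosmetic difference is that the paper's proof cites \cite[Lemma~3.6 and Theorem~3.8]{extremal} rather than \cite[Theorem~7.9]{Lr}, but the content you sketch (the Bayer--Peeva--Sturmfels acyclicity criterion for (1), and summing multigraded inequalities for (3)) is precisely what lies behind those references.
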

 
\begin{proof}
 By~\cref{p:old-new}~(3), $\psi_I(\EDr)R=I^rR$ and by~\cref{p:old-new}~(4), least common multiples are preserved. Thus \cite[Lemma 3.6]{extremal} holds when $\Erq$ is replaced by $\EDr$. The proof now follows as in \cite[Theorem 3.8]{extremal}. 
\end{proof}

 In the case of the first power of the $\D$-extremal ideal, we also state stronger results that can be applied to all monomial ideals.
 The final theorem shows that, given an integer $q\ge 1$, knowledge of the minimal free resolutions of the extremal ideals $\ED$ for all possible minimal (in the sense of \cref{d:minimal}) sets $\D\subseteq [q]\times 2^{[q]}$ as in \eqref{eq:D-def} results in knowledge of the minimal free resolutions of {\it all} ideals minimally generated by $q$ (not necessarily square-free) monomials. In particular, any possible behavior of a free resolution of a monomial ideal can be modeled using an appropriately chosen $\D$-extremal ideal. This idea  has been put to use in~\cite{Reid}  towards constructing examples of minimal free resolutions of monomial ideals that are dependent on characteristic.  

We will use the following result of Gasharov et.~al~\cite[Theorem 3.3]{GPW}.
 If $I$ and $I'$ are monomial ideals such that there exists a map $f\colon \LCM(I)\to \LCM(I')$ that is bijective on the atoms and preserves joins, and $\mathbf F$ is a minimal graded free resolution of $I$, then \cite[Construction 3.2]{GPW} describes a graded complex $f(\mathbf F)$ which is obtained from $\mathbf F$ via a relabeling.  It is proved in \cite[Theorem 3.3]{GPW} that $f(\mathbf F)$ is a free resolution of $I'$ and, if $f$ is an isomorphism of lattices, then $f(\mathbf F)$ is minimal as well.

\begin{theorem} 
\label{t:lattice}
 Let  $I$ be an ideal minimally generated by $q \geq 1$ monomials, $\D$ a set of divisibility relations such that $\D\subseteq \R(I)$. 
 Then there exists a map $$f\colon \LCM(\ED)\to \LCM(I)$$
 such that $f$ is bijective on the atoms, preserves lcms (joins), and maps the minimal multigraded free resolution $\mathbf F_\D$ of $\ED$ to a multigraded free resolution $f(\mathbf F_\D)$ of $I$. Furthermore, if $\overline \D=\R(I)$  then $f$ is an isomorphism of lattices and thus  $f(\mathbf F_\D)$ is the minimal multigraded free resolution of $I$, and in particular $\beta_i(\ED)=\beta_i(I)$ for all $i\ge 0$. 
\end{theorem}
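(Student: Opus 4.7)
The plan is to define $f$ on atoms by $f(\ed{i})=m_i$ and extend by joins, then invoke \cite[Theorem 3.3]{GPW} to get the resolution statement. By \cref{p:mingens-simple}(1) the atoms of $\LCM(\ED)$ are exactly $\ed{1},\ldots,\ed{q}$, and by hypothesis those of $\LCM(I)$ are $m_1,\ldots,m_q$. For each nonempty $A\subseteq [q]$, set
$$f\bigl(\lcm(\ed{i}\st i\in A)\bigr) = \lcm(m_i\st i\in A).$$
The first task is to verify that $f$ is well-defined. The key observation is the general fact that, for any collection of monomials $S$, one has $\lcm(S)=\lcm(S')$ if and only if every element of $S$ divides $\lcm(S')$ and conversely. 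Thus $\lcm(\ed{i}\st i\in A)=\lcm(\ed{i}\st i\in A')$ amounts to the assertion that $(a,A')\in \R(\ED)$ for every $a\in A$ and $(a',A)\in \R(\ED)$ for every $a'\in A'$. By \cref{t:power1-rels}, $\R(\ED)=\overline{\D}$, and since $\D\subseteq \R(I)$, \cref{p:closure}(6) gives $\overline{\D}\subseteq \R(I)$. Hence each such relation also holds among the $m_i$, so $\lcm(m_i\st i\in A)=\lcm(m_i\st i\in A')$. This shows $f$ is well-defined; by construction it is bijective on atoms and preserves joins. Then \cite[Theorem 3.3]{GPW} produces the multigraded free resolution $f(\mathbf F_\D)$ of $I$ from the minimal multigraded free resolution $\mathbf F_\D$ of $\ED$.

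For the second part, assume $\overline{\D}=\R(I)$, so that $\R(\ED)=\R(I)$ via \cref{t:power1-rels}. Since every element of $\LCM(I)$ is a join of the atoms $m_1,\ldots,m_q$, the map $f$ is surjective. For injectivity, suppose $\lcm(m_i\st i\in A)=\lcm(m_i\st i\in A')$. The same divisibility-relation characterization used above, run in reverse, now shows $(a,A')\in \R(I)=\R(\ED)$ for each $a\in A$, and symmetrically, so $\lcm(\ed{i}\st i\in A)=\lcm(\ed{i}\st i\in A')$. A join-preserving bijection between finite lattices is automatically an order isomorphism, since $a\le b \iff a\vee b=b$ is preserved in both directions; hence $f$ is a lattice isomorphism. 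The minimality clause of \cite[Theorem 3.3]{GPW} then gives that $f(\mathbf F_\D)$ is the minimal multigraded free resolution of $I$, yielding $\beta_i(\ED)=\beta_i(I)$ for all $i\ge 0$.

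The main obstacle is the well-definedness (and, in the second part, the injectivity) of $f$; both reduce to the same point, namely translating equalities of lcms of subsets of atoms in either $\LCM(\ED)$ or $\LCM(I)$ into statements about membership of divisibility relations in $\R(\ED)$, respectively $\R(I)$. Once this translation is set up, the identification $\R(\ED)=\overline{\D}$ from \cref{t:power1-rels}, together with the hypothesis comparing $\D$ (or $\overline{\D}$) to $\R(I)$, delivers both directions, and the abstract machinery of \cite{GPW} produces the desired (minimal) resolutions automatically.
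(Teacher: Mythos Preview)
Your proof is correct and follows essentially the same approach as the paper: define $f$ on atoms and extend via joins, reduce well-definedness (and injectivity in the equality case) to the inclusion $\R(\ED)=\overline\D\subseteq \R(I)$ obtained from \cref{t:power1-rels} and \cref{p:closure}(6), and then invoke \cite[Theorem~3.3]{GPW}. You supply more detail than the paper does---in particular the explicit translation of lcm equalities into divisibility relations and the verification that a join-preserving bijection is an order isomorphism---but the argument is the same.
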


\begin{proof} Assume  $I$ is minimally generated by $m_1, \dots, m_q$.
If $\D\subseteq \R(I)$, it follows that $\overline{\D}\subseteq \overline{\R(I)}$ and hence
    \begin{equation}
    \label{e:RD}
    \R(\ED)=\overline{\D}\subseteq \overline{\R(I)}=\R(I),
    \end{equation}
    where the first equality comes from \cref{t:power1-rels} and the second equality comes from \cref{p:closure}~(6). 
      We define $f\colon \LCM(\ED)\to \LCM(I)$ such that $f$ sends the lattice element labeled by $\lcm(\ed{i_1}, \dots, \ed{i_s})$ to the lattice element labeled by $\lcm(m_{i_1}, \dots, m_{i_s})$, for all $1\le i_1<\dots<i_s\le q$ and $s\in [q]$. The fact that $f$ is well-defined is a straightforward consequence of the inclusion $\R(\ED)\subseteq \R(I)$ of \eqref{e:RD}.  Furthermore, if $\overline \D=\R(I)$, then equality holds in \eqref{e:RD} and thus $\R(\ED)=\R(I)$, implying that the map $f$ is an isomorphism of lattices. The conclusions about $f(\mathbf F_\D)$ follow from \cite[Theorem 3.3]{GPW}. 
\end{proof}


\end{document}